\theoremstyle{thmstyleone}
\newtheorem{theorem}{Theorem}
\newtheorem{lemma}{Lemma}
\newtheorem{assumption}{Assumption}
\theoremstyle{thmstylethree}
\newtheorem{remark}{Remark}
\crefname{enumi}{Step}{Steps}
\crefname{enumii}{Step}{Steps}
\crefname{enumiii}{Step}{Steps}
\crefname{enumiv}{Step}{Steps}
\begin{document}

\title[DFO with Inexact ADMM]{Distributed Derivative-Free Optimization Using Inexact ADMM and Trust-Region Methods}

\author[1]{\fnm{Damilola} \sur{Fasiku}}\email{dfasiku@ncsu.edu}
\author*[1]{\fnm{Wentao} \sur{Tang}}\email{wtang23@ncsu.edu}

\affil[1]{\orgdiv{Department of Chemical and Biomolecular Engineering}, \orgname{North Carolina State University}, \orgaddress{\street{911 Partners Way}, \city{Raleigh}, \postcode{27695}, \state{North Carolina}, \country{USA}}}

\abstract{To reduce complexity and achieve scalable performance in high-dimensional black-box settings, we propose a distributed method for nonconvex derivative-free optimization (DFO) of continuous variables with an additively separable objective, subject to linear equality constraints. The approach is built upon the alternating direction method of multipliers (ADMM) as the distributed optimization framework. To handle general, potentially complicating linear equality constraints beyond the standard ADMM formulation, we employ a \textit{two-level ADMM structure}: an inner layer that performs sequential ADMM updates, and an outer layer that drives an introduced slack variable to zero via the method of multipliers. In addition, each subproblem is solved inexactly using a \textit{derivative-free trust-region solver}, ensuring suboptimality within a decreasing, theoretically controlled error tolerance. This inexactness is critical for both computational efficiency and practical applicability in black-box settings, where exact solutions are impractical or overly expensive. We establish theoretical convergence of the proposed approach to an approximate solution, building on analyses in related work, and demonstrate improved computational efficiency over monolithic DFO approaches on challenging high-dimensional benchmarks, as well as effective performance on a distributed learning problem.}

\keywords{ Derivative-free optimization, Distributed optimization, Alternating direction method of multipliers (ADMM), Trust-region method}

\maketitle

\section{Introduction}\label{sec:intro}

\subsection{Background}
Arguably, derivative information, when available, generally facilitates the convergence of iterative optimization methods, as this information is crucial to identifying effective search directions \cite{liang2024}. However, cases where derivative information is unavailable are not uncommon in scientific and engineering applications, including machine learning (e.g., hyperparameter tuning), process engineering (e.g., flowsheet design, computational fluid dynamics), and chemical/material computational design \cite{conn2009,boukouvala2014,bergstra2011,zaryab2022,terayama2021}. This can be due to the lack of an explicit mathematical model of the objective function, discontinuities, or non-differentiability at certain points, the unreliability of derivatives from noisy objective functions, and the unknown analytical form of the derivative \cite{larson2019}. Optimization in this sense is called \emph{derivative-free optimization (DFO)} or black-box optimization—an area of increasing research interest over the past two decades \cite{conn2009,boukouvala2014,larson2019,powell2003}.

The DFO methods in the literature can be broadly categorized into direct search and model-based methods \cite{larson2019,boukouvala2016}. Direct search methods iteratively explore the search space by comparing objective function values evaluated at strategically selected sample points. This category includes techniques such as Random Search, which evaluates the objective function at randomly chosen points within the domain, and the Nelder-Mead Simplex method, which adjusts a simplex of points toward lower function values \cite{rios2013}. Model-based DFO methods involve constructing approximate models of the objective function based on available data points. A popular optimization method in this category is Bayesian Optimization, a probabilistic approach that uses a Gaussian process to model the objective function and an acquisition function to select new evaluation points with a balance between exploration of uncertain regions with exploitation of promising ones \cite{frazier2018,shahriari2015}. The application of Bayesian optimization in engineering has been shown in molecular modeling and design, additive manufacturing, controller tuning, and bubble column technology \cite{wang2022,lu2020,kudva2022}, but its theoretical convergence largely remains an open question.

The \emph{trust-region (TR) methods}, when extended to derivative-free settings, form another class of model-based DFO methods. Here, surrogate models are constructed using function values within a defined trust region, and optimization steps are guided by these approximations, eliminating the need for derivative information \cite{liang2024,larson2019,eason2016,bajaj2018,conn2000}. The global convergence of trust-region (TR) methods in derivative-free settings to first- and second-order critical points for smooth black-box functions has been established in \cite{conn2009global}. This represents an improvement over the earlier TR-based method proposed in \cite{powell2003}, which demonstrated good practical performance but lacked theoretical convergence guarantees. In \cite{shashaani2018}, ASTRO-DF was introduced as a trust-region-based derivative-free optimization algorithm for stochastic settings, leveraging adaptive sampling to efficiently converge to first-order critical points. In \cite{ghanbari2017}, a trust-region-based derivative-free optimization (TR-DFO) method was compared to Bayesian optimization methods in machine learning applications, demonstrating superior performance in optimizing noisy functions, including the Area Under the Receiver Operating Characteristic Curve (AUC), with fewer evaluations. The trust-region method in \cite{conn2009global} was extended in \cite{garmanjani2016} to handle both smooth and nonsmooth functions, with worst-case complexity and convergence guarantees. However, the smoothing-based approach in \cite{garmanjani2016} is only effective when the source of the nonsmoothness is known, limiting its applicability in pure black-box settings. Inspired by bundle methods, \cite{Liuzzi2019} introduced a trust-region method suitable for black-box nonsmooth functions, with global convergence guarantees. The techniques and results from \cite{Liuzzi2019} will be discussed later in this work.

However, it should be noted that \emph{scalability of DFO methods to higher-dimensional problems} remains an open area of research. The direct use of existing methods often struggles in these settings due to the increase in computational and sample complexity, known as \enquote{the curse of dimensionality} \cite{rios2013,ling2017}, which frequently arises in applications such as drug discovery \cite{he2021} and materials design \cite{ling2017}. In the derivative‐free optimization literature, high‐dimensional problems are typically addressed via two strategies: (i) reducing dimensionality through variable selection or subspace embeddings, under the assumption of low intrinsic dimensionality \cite{qian2016,wang2016,song2022}; and (ii) exploiting structural properties of the objective function, such as additivity, partial separability, or overlapping group structures \cite{rolland2018,porcelli2022}. In the spirit of the second strategy above, this paper shows how an ADMM-based \emph{distributed optimization} framework can be employed to solve high-dimensional DFO problems with structured objectives by decomposing the monolithic problem into manageable subproblems. 

Historically, the list of decomposition strategies includes Dantzig-Wolfe decomposition \cite{dantzig1960}, which is suitable for large-scale linear programming problems, and Bender’s decomposition \cite{benders2005}, suitable for mixed-integer linear programming problems. These methods leverage duality in a structured way to decompose a problem into more manageable parts. However, dual decomposition methods require relatively strong assumptions, such as differentiability of the objective function, and often suffer from slow convergence when subgradient methods are used \cite{boyd2011}. The method of multipliers relaxes these assumptions while improving convergence stability by introducing a quadratic penalty term for constraint violations. However, this penalty term couples the primal variables, thereby preventing decomposition across subproblems \cite{boyd2011,zaryab2022}. ADMM combines the decomposability of dual decomposition with the convergence stability introduced by the quadratic regularization of the Method of Multipliers \cite{boyd2011}, making it highly suitable for distributed environments—particularly in pure black-box settings, which are characterized by the lack of derivative information and possible nonsmoothness; hence, its adoption in this work. Developed in the 1970s \cite{glowinski1975,gabay1976}, ADMM has been rediscovered in various forms, such as Douglas–Rachford splitting \cite{ryu2022} and Spingarn’s method of partial inverses \cite{spingarn1985}.

\subsection{Problem Description}

In this work, we consider a potentially high-dimensional optimization problem in derivative-free settings. We assume that this problem is separable, and linearly constrained as follows:

\begin{equation}
\begin{aligned}
\min_{(x_i)_{i=1,\ldots,N}, \, \bar{x}} & \sum_{i=1}^{N} f_i(x_i) + g(\bar{x}) \\
\text{s.t.} & \sum_{i=1}^N A_i x_i + B\bar{x} = b,
\end{aligned}
\label{eq:sep_opt_problem}
\end{equation}

\noindent where \( f_i : \mathbb{R}^{n_{x_i}} \rightarrow \mathbb{R} \) and \( g : \mathbb{R}^{n_{\bar{x}}} \rightarrow \mathbb{R} \) are both lower bounded functions. \( A_i \in \mathbb{R}^{m \times n_{x_i}} \), \( B \in \mathbb{R}^{m \times n_{\bar{x}}} \), and \( b \in \mathbb{R}^m \) are given data.  We assume that each \( f_i \) and \( g \) are black-box functions. It is important to state the equivalence of the problem structure in \eqref{eq:sep_opt_problem} to the following more conventional ADMM problem structure.

\begin{equation}
\begin{aligned}
\min_{x, \bar{x}} & \, f(x) + g(\bar{x}) \\
\text{s.t. } & \, Ax + B\bar{x} = b,
\end{aligned}
\label{eq:standard_admm_form}
\end{equation}

\noindent where \( f(x) = \sum_{i=1}^N f_i(x_i) \), \( A = [A_1 \dots A_N] \), and \( x = [x_1^\top, \dots, x_N^\top]^\top \).

The formulation in \eqref{eq:sep_opt_problem} is sufficiently general to capture unconstrained, partially separable high-dimensional DFO problems with arbitrary variable overlaps, by introducing local copies of the overlapping variables and enforcing consensus through the linear constraints. Another important instance within this framework is derivative-free consensus optimization, where multiple agents minimize the sum of local objective functions while enforcing agreement on a common decision variable without relying on gradients. This corresponds to the special case of \eqref{eq:sep_opt_problem} with \( g(\bar{x}) = 0 \), \( A_i = I \), and \( B = -I \), making \( \bar{x} \) the global consensus variable. Recent works  \cite{krishnamoorthy2023,krishnamoorthy2025} have investigated this setting by integrating Bayesian optimization with ADMM. Our general formulation extends beyond consensus to allow arbitrary linear couplings between subproblems.

\subsection{Relevant Literature}

In convex settings, the traditional ADMM has been both theoretically and practically shown to solve \eqref{eq:sep_opt_problem} with a linear convergence rate \cite{boyd2011,rodriguez2018}. More recently, attention has shifted to the analysis of ADMM in \textit{nonsmooth and nonconvex} settings \cite{hong2016,wang2019}. In particular, \cite{hong2016} established convergence of ADMM iterates to stationary points by employing the augmented Lagrangian as a Lyapunov function, provided that a sufficiently large penalty parameter is chosen. Subsequently, \cite{wang2019} derived similar convergence guarantees under weaker assumptions, thereby accommodating broader classes of nonconvex functions and constraint sets. While these results appear, in principle, extendable to the derivative-free setting of \eqref{eq:sep_opt_problem}, they rely on restrictive structural conditions on the constraint matrices—such as requiring \(B = I\) with \(A\) of full column rank \cite{hong2016}, or that \(\operatorname{Im}(A) \subseteq \operatorname{Im}(B)\) \cite{wang2019}. These conditions, however, are often violated in complex sharing and consensus problems.

In addition, most convergence results from these studies assumed that the subproblems are solved exactly, which may lead to excessive computational cost per ADMM iteration, especially in nonsmooth and nonconvex settings \cite{tang2022}. As a result, many studies have explored \textit{inexact ADMM,} in derivative-based settings where each subproblem is solved inexactly \cite{hager2019,bai2025,eckstein2017,xie2017,tang2022}. For the approximate update in each subproblem, the gradient is not required to be exactly 0 but should asymptotically converge to 0. The two main approaches to achieve this are: (i) assigning a diminishing and summable sequence of absolute errors, and (ii) using a sequence of relative errors that scale proportionally with other quantities in the algorithm~\cite{eckstein2017,xie2017}. Inexact ADMM method in convex settings are considered in \cite{eckstein2017}, while a distributed optimization algorithm for nonconvex constrained optimization, employing inexact ADMM, was developed in \cite{tang2022}. Recently, an inexact nonconvex ADMM method was introduced \cite{bai2025} under the same structural restrictions on \(A\) and \(B\) discussed above. This method achieves global and linear convergence when the subproblems satisfy restricted prox-regularity, and the penalty parameters are suitably chosen. 

 To address the structural restrictions on \( A \) and \( B \), \cite{sun2023} proposed a two-level distributed algorithm that introduces a slack variable constrained to zero. The problem is then relaxed by incorporating this constraint into the objective through its augmented Lagrangian, yielding a formulation where the inner ADMM layer solves the relaxed problem while an outer method-of-multipliers iteration progressively drives the slack variable to zero. This two-layered ADMM structure has also been employed in \cite{tang2022}, where a combination of inexact ADMM and a modified Anderson acceleration technique was used to improve the computational efficiency of the two-level framework in the presence of nonconvex constraints. The theoretical and practical performance of this framework has been demonstrated in these works. In this paper, we extend the two-layered framework to derivative-free settings and establish its convergence guarantees. Specifically, trust-region-based DFO methods \cite{conn2009global,Liuzzi2019}, as discussed earlier, are employed to solve the subproblems arising within the proposed ADMM structure.

\subsection{Summary of Contributions}

Our contributions are summarized below:

\textbf{1. A general algorithm for complex distributed structures.}
We propose a novel two-layer inexact ADMM algorithm to solve Problem~\eqref{eq:sep_opt_problem} in nonconvex and possibly nonsmooth black-box settings. Its primary advantage is the removal of restrictive assumptions on the coupling matrices \( A \) and \( B \), enabling its application to problems far beyond simple consensus, such as complex resource sharing and multi-level optimization. To the best of our knowledge, this is the first ADMM-based DFO method with theoretical guarantees that handles this level of generality.

\textbf{2. A practical inexact framework with a novel stationarity measure:}
To ensure computational tractability, we solve the ADMM subproblems inexactly. Our key innovation is to leverage the trust-region radius from DFO solvers as a practical and computable measure of the stationarity gap for each black-box subproblem minimization. This provides a rigorous mechanism to control the subproblem errors against dynamically decreasing tolerance sequences.

\textbf{3. Theoretical guarantees and numerical validation:}
We establish the global convergence of the overall algorithm to stationary points under mild assumptions. Furthermore, we demonstrate its effectiveness and computational advantage over monolithic solvers on high-dimensional, block-separable problems in derivative-free settings, including its application to gradient-free multi-agent optimization with general constraints.

The remainder of this work is organized as follows. Section~\ref{sec:prelim} reviews trust region-based DFO, ADMM, and the two-level ADMM framework. Section~\ref{sec:algo} details the proposed algorithm and its convergence theory. Numerical examples are presented in Section~\ref{sec:numerical} and conclusions are given in Section~\ref{sec:conclusion}.

\section{Preliminaries}\label{sec:prelim}

\subsection{Trust-Region-Based DFO: Smooth Case}

Trust-region methods are a major class of algorithms for derivative-free optimization, designed to solve the unconstrained problem
\begin{equation}\label{eq:generic_opt_problem}
\min_{x \in \mathbb{R}^n} h(x),
\end{equation}
where the objective function $h$ is accessible only through zeroth-order evaluations. At each iteration $q$, a local surrogate model $m_q(x)$ is constructed around the current iterate $x_q$, and the search for a candidate step is restricted to a trust region of radius $\Delta^q$. Within this region, the model must approximate $h$ with sufficient accuracy to ensure convergence. The radius is then adjusted based on how well the model predicts the actual reduction in $h$.

When $h$ is continuously differentiable, \cite{conn2009} proposed a trust-region framework that constructs fully linear models using previously evaluated function values. These models are updated iteratively to satisfy specific local accuracy requirements within the trust region. A model $m^q(x)$ at iteration $q$, defined for $x = x^q + s$ with $\|s\| \leq \Delta^q$, is said to be \textit{fully linear} if it satisfies
\begin{subequations}\label{eq:fullylinear}
\begin{equation}\label{eq:grad-approx}
\|\nabla h(x^q + s) - \nabla m^q(x^q + s)\| \leq \kappa_{eg}\Delta^q,
\end{equation}
\begin{equation}\label{eq:fun-approx}
|h(x^q + s) - m^q(x^q + s)| \leq \kappa_{ef}(\Delta^q)^2,
\end{equation}
\end{subequations}
where $\kappa_{eg}$ and $\kappa_{ef}$ are constants that bound the model’s gradient and function-value accuracy. The constants, \( \kappa_{eg} \) and \( \kappa_{ef} \), ensure approximations analogous to Taylor expansions, capturing both function values and local directional information. Despite the term “linear,” these models are often quadratic in structure \cite{conn2000}. A typical quadratic model takes the form:
\begin{equation}\label{eq:quadratic_model}
m^q(x) = h(x^q) + (g^q)^\top (x - x^q) + \tfrac{1}{2}(x - x^q)^\top H^q (x - x^q)
\end{equation}

\noindent where $g^q$ and $H^q$ are approximations of the gradient and Hessian of $h$ near $x^q$, constructed using interpolation or regression over a set of previously evaluated points.

To ensure attainability of a fully linear model, a model improvement algorithm is often used. This algorithm iteratively refines the set of sample points until the model meets the required accuracy conditions. For interpolation, this means ensuring the points are well-distributed (a concept called $\Lambda$-poisedness). For regression, it requires the basis matrix to be well-conditioned.

Now we present a trust-region DFO algorithm for smooth objectives, adapted from \cite{conn2009}. 
At each iteration, the algorithm maintains an incumbent model $m_{\mathrm{icb}}^q$ and trust-region radius $\Delta_{\mathrm{icb}}^q$, representing the most recently validated fully linear model and its associated radius. 
The model gradient $g^q := \nabla m^q(x^q)$ and Hessian $H^q := \nabla^2 m^q(x^q)$ serve as local approximations of the objective’s first- and second-order behavior. 
If the model gradient $\|g_{\mathrm{icb}}^q\|$ is sufficiently large, the current model is used directly; otherwise, a criticality check is performed, and the model is refined by reducing the radius until it satisfies the fully linear condition. 
A trial step is then computed by approximately minimizing the local model within the trust region, and the step is accepted or rejected based on the agreement between the actual and predicted reductions in the objective. 
The trust-region radius and model are then updated: successful steps lead to an increase in radius, while unsuccessful ones prompt model improvement and radius contraction. 
The complete steps are detailed in Algorithm~\ref{dfo_smooth_alg}.

We present below the assumptions for the global convergence of Algorithm~\ref{dfo_smooth_alg} in solving \eqref{eq:generic_opt_problem}.

\begin{assumption}\label{ass:smooth-lip-gradient}
The function $h$ is continuously differentiable and bounded below, with a gradient 
$\nabla h$ that is Lipschitz continuous on an open set containing the union of all trust regions 
generated by the algorithm.
\end{assumption}

\begin{assumption}\label{ass:hessian-bounded}
The model Hessians, defined in \eqref{eq:quadratic_model}, are uniformly bounded, that is, $\| H_{k,r,q} \| \le \kappa_{\mathrm{bhm}}$ for some constant $\kappa_{\mathrm{bhm}} > 0$.
\end{assumption}

Assumption~\ref{ass:smooth-lip-gradient} ensures the objective function is sufficiently well-behaved, guaranteeing that the Taylor-based models used in the trust-region framework are justified. Assumption~\ref{ass:hessian-bounded} ensures the curvature of the local models remains controlled, which is essential for the stability of the algorithm and the convergence analysis.

\begin{lemma}[Global convergence of Algorithm~\ref{dfo_smooth_alg}, \cite{conn2009}]
\label{lem:dfo_tr convergence}
Under Assumptions~\ref{ass:smooth-lip-gradient} and~\ref{ass:hessian-bounded}, the trust region radius satisfies
\begin{equation}\label{eq:tr_radius_zero}
\lim_{q \to \infty} \Delta^{q} = 0,
\end{equation}
and all limit points of the sequence $\{x^{q}\}$ are first-order stationary; hence
\begin{equation}\label{eq:gradient_zero}
\liminf_{q \to \infty} \| \nabla h(x^{q}) \| = 0.
\end{equation}
\end{lemma}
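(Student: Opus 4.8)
The plan is to follow the classical first-order convergence argument for derivative-free trust-region methods, which rests on three ingredients available in the present setting. First, the approximate minimization of the local model over the trust region delivers a \emph{fraction-of-Cauchy-decrease} guarantee: there is a constant $\kappa_{fcd}\in(0,1]$ such that the predicted reduction satisfies
\[
m^q(x^q) - m^q(x^q + s^q) \;\ge\; \tfrac{1}{2}\,\kappa_{fcd}\,\|g^q\|\,\min\!\Big\{\tfrac{\|g^q\|}{\kappa_{\mathrm{bhm}}},\,\Delta^q\Big\},
\]
where the Hessian bound $\kappa_{\mathrm{bhm}}$ comes from Assumption~\ref{ass:hessian-bounded}. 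Second, on every validated (fully linear) iteration the model errors are controlled by \eqref{eq:grad-approx} and \eqref{eq:fun-approx}. Third, $h$ is bounded below by Assumption~\ref{ass:smooth-lip-gradient}. The proof then proceeds in two stages: I would first prove the radius limit \eqref{eq:tr_radius_zero}, and then use it to establish the stationarity limit \eqref{eq:gradient_zero}.

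To prove $\Delta^q\to 0$, I would argue by contradiction, assuming $\Delta^q \ge \delta > 0$ for infinitely many $q$. Summing the actual reductions over successful iterations and invoking the lower bound on $h$ shows that the total decrease is finite, so only finitely many successful iterations can produce a decrease bounded away from zero. The Cauchy bound above shows that a successful iteration with both $\Delta^q \ge \delta$ and model gradient above the criticality threshold yields a decrease bounded below by a fixed positive constant; such iterations are therefore finite in number. The criticality step of Algorithm~\ref{dfo_smooth_alg} rules out the remaining case, since it forces the radius to be driven down proportionally to $\|g^q\|$ whenever the model gradient is small, so the radius cannot remain at or above $\delta$ while the gradient is small. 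Combining these with the radius bookkeeping of the algorithm---the radius is enlarged only on (very) successful steps and contracted on unsuccessful and model-improvement steps---yields the desired contradiction, establishing \eqref{eq:tr_radius_zero}.

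With $\Delta^q\to 0$ in hand, I would prove \eqref{eq:gradient_zero} again by contradiction, supposing $\|\nabla h(x^q)\| \ge \epsilon$ for all large $q$. By \eqref{eq:grad-approx} and $\Delta^q\to 0$ we have $\|g^q\| \ge \|\nabla h(x^q)\| - \kappa_{eg}\Delta^q \ge \epsilon/2$ for all large $q$ on validated iterations. For $\Delta^q$ small enough that $\Delta^q \le (\epsilon/2)/\kappa_{\mathrm{bhm}}$, the Cauchy bound gives a predicted reduction of at least $\tfrac{1}{4}\kappa_{fcd}\,\epsilon\,\Delta^q$, while \eqref{eq:fun-approx} bounds the model mismatch at the two endpoints by $2\kappa_{ef}(\Delta^q)^2$. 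Hence the agreement ratio satisfies $|\rho^q - 1| \le 8\kappa_{ef}\Delta^q/(\kappa_{fcd}\,\epsilon) \to 0$, so every sufficiently refined iteration is very successful and the radius is increased rather than contracted. This prevents $\Delta^q$ from converging to zero, contradicting \eqref{eq:tr_radius_zero}; therefore $\liminf_{q\to\infty}\|\nabla h(x^q)\| = 0$.

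The main obstacle is the first stage, $\Delta^q\to 0$: the delicate bookkeeping lies in simultaneously handling successful, unsuccessful, model-improvement, and criticality iterations, and in showing that the criticality step correctly couples a small model gradient to a small radius so that no subsequence of radii can stay bounded away from zero. The second stage is essentially the mechanical ratio estimate shown above, and the fraction-of-Cauchy-decrease and full-linearity bounds are standard consequences of the model-improvement procedure rather than points of difficulty.
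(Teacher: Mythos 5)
The paper contains no proof of Lemma~\ref{lem:dfo_tr convergence}: the result is imported verbatim from \cite{conn2009}, so the only meaningful benchmark is the classical Conn--Scheinberg--Vicente argument in that reference. Your reconstruction follows exactly that argument and with the right ingredients: the fraction-of-Cauchy-decrease condition from Step 2 of Algorithm~\ref{dfo_smooth_alg}, the full-linearity bounds \eqref{eq:grad-approx}--\eqref{eq:fun-approx}, lower boundedness of $h$, the criticality coupling $\|g^q\|\ge\min\{\varepsilon_c,\Delta^q/\mu\}$, and the radius bookkeeping of Step 5, organized in the standard two stages: first $\Delta^q\to 0$ by a summability/contradiction argument, then the gradient liminf via the ratio estimate $|\rho^q-1|\le 8\kappa_{ef}\Delta^q/(\kappa_{\mathrm{fcd}}\,\epsilon)$, which is precisely the classical computation.

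Two points deserve attention, one of which is a genuine gap. First, the lemma asserts more than \eqref{eq:gradient_zero}: it claims that \emph{all} limit points of $\{x^q\}$ are first-order stationary, which is the ``lim-type'' theorem of \cite{conn2009} and is strictly stronger than the liminf you establish. The extra step needed is the standard one: supposing a subsequence $x^{q_i}\to x^*$ with $\|\nabla h(x^{q_i})\|\ge\epsilon$, use \eqref{eq:gradient_zero} to pick for each $i$ the first later index $\ell_i$ with gradient below $\epsilon/2$; on the intervening iterations the gradients (hence, by full linearity, the model gradients) stay bounded away from zero, so every successful step yields a function decrease proportional to the step length, and lower boundedness of $h$ forces $\|x^{q_i}-x^{\ell_i}\|\to 0$; Lipschitz continuity of $\nabla h$ (Assumption~\ref{ass:smooth-lip-gradient}) then yields a contradiction. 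Your proposal omits this entirely. Second, and less seriously, your Stage 2 applies the full-linearity bound and the ratio estimate at ``every sufficiently refined iteration,'' but these are valid only at iterations whose model is certified fully linear; the argument closes because Algorithm~\ref{dfo_smooth_alg} contracts the radius \emph{only} at fully linear unsuccessful iterations (third case of Step 5 leaves the radius unchanged otherwise) and inside the criticality loop, and one must separately verify that the loop in Step~\ref{step:1.2} cannot drive the radius to zero when $\|\nabla h(x^q)\|\ge\epsilon$: it exits once $\tilde\Delta^q\le\mu\|\tilde g^q\|$, and $\|\tilde g^q\|\ge\epsilon-\kappa_{eg}\tilde\Delta^q$ bounds the exit radius below by roughly $\mu\epsilon/(1+\kappa_{eg}\mu)$. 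You flag the bookkeeping burden in Stage 1 but Stage 2 silently relies on this same structure; it should be stated, since it is exactly where the proof would fail for a trust-region scheme that shrinks the radius at non-certified iterations.
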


The convergence results in Lemma~\ref{lem:dfo_tr convergence} provide theoretical justification for using the trust-region radius as a surrogate measure of stationarity in smooth settings as this will be exploited later in Section~\ref{sec:algo}

\begin{algorithm}[H]
\caption{Trust-region DFO for smooth objective functions}
\label{dfo_smooth_alg}
\textbf{Input:} Initial point \(x^{0}\), incumbent model \(m^{0}_{\mathrm{icb}}\), initial radius \(\Delta^{0}_{\mathrm{icb}}\in(0,\Delta_{\max}]\).\\
\textbf{Constants:} \(0\le \eta_0 \le \eta_1 < 1,\ \varepsilon_c>0,\ \kappa_{\mathrm{fcd}}\in(0,1],\ \alpha\in(0,1),\ \zeta\in(0,1),\ \gamma_{\mathrm{inc}}>1,\ \mu>\tau>0.\)\\
Set iteration counter \(q=0\).

\medskip
\medskip
\medskip

\textbf{Step 1 (Criticality check).}
\quad\textit{Notation: } $g^{q}_{\mathrm{icb}} := \nabla m^{q}_{\mathrm{icb}}(x^{q})$, \ 
$\tilde g^{q} := \nabla \tilde m^{q}(x^{q})$.

\begin{enumerate}[label=\textbf{1.\arabic*},ref=1.\arabic*]
  \item \label{step:1.1}
    If $\|g^{q}_{\mathrm{icb}}\|>\varepsilon_c$: 
    set $m^{q}\gets m^{q}_{\mathrm{icb}}$, \ $\Delta^{q}\gets \Delta^{q}_{\mathrm{icb}}$.

  \item \label{step:1.2}
    Else ($\|g^{q}_{\mathrm{icb}}\|\le \varepsilon_c$): 
    attempt to certify $m^{q}_{\mathrm{icb}}$ fully linear on $B(x^{q};\Delta^{q}_{\mathrm{icb}})$.
    \begin{enumerate}[label=\textbf{1.\arabic{enumi}(\alph*)},ref=1.\arabic{enumi}(\alph*)]
      \item \label{step:1.2a}
        If not certifiable or $\Delta^{q}_{\mathrm{icb}}>\mu\|g^{q}_{\mathrm{icb}}\|$: 
        Perform the following model-improvement loop.\\ 
        Set $u\gets0$. \textbf{Repeat:}
        \begin{enumerate}
          \item $u\gets u+1$
          \item $\tilde{\Delta}^{q}\gets \alpha^{u-1}\Delta^{q}_{\mathrm{icb}}$
          \item Build $\tilde{m}^{q}$ fully linear on $B(x^{q};\tilde{\Delta}^{q})$ 
                and set $\tilde{g}^{q}=\nabla\tilde{m}^{q}(x^{q})$
        \end{enumerate}
        \textbf{until} $\tilde{\Delta}^{q}\le \mu\|\tilde{g}^{q}\|$. \\
        Set $m^{q}\gets \tilde{m}^{q}$, \ $\Delta^{q}\gets 
        \min\{\max(\tilde{\Delta}^{q},\,\tau\|\tilde{g}^{q}\|),\ \Delta^{q}_{\mathrm{icb}}\}$.

      \item \label{step:1.2b}
        Otherwise: $m^{q}\gets m^{q}_{\mathrm{icb}}$, \ 
        $\Delta^{q}\gets \Delta^{q}_{\mathrm{icb}}$.
    \end{enumerate}
\end{enumerate}

\medskip
\textbf{Step 2 (Step computation).} Compute \(s^{q}\) with \(x^{q}{+}s^{q}\in B(x^{q};\Delta^{q})\) such that
\[
m^{q}(x^{q})-m^{q}(x^{q}{+}s^{q})
\ \ge\
\frac{\kappa_{\mathrm{fcd}}}{2}\,\|g^{q}\|\,
\min\!\left\{\ \|g^{q}\|,\ \frac{\|g^{q}\|}{\|H^{q}\|},\ \Delta^{q}\ \right\}.
\]

\medskip
\textbf{Step 3 (Trial point acceptance).} Evaluate
\[
\rho^{q}=\frac{h(x^{q})-h(x^{q}{+}s^{q})}{\,m^{q}(x^{q})-m^{q}(x^{q}{+}s^{q})\,}.
\]
If \(\rho^{q}\ge \eta_1\) or \(\big(\rho^{q}\ge \eta_0\ \text{and $m^{q}$ fully linear on }B(x^{q};\Delta^{q})\big)\):
set \(x^{q+1}\gets x^{q}{+}s^{q}\) and update \(m^{q+1}_{\mathrm{icb}}\);
else set \(x^{q+1}\gets x^{q}\), \(m^{q+1}_{\mathrm{icb}}\gets m^{q}_{\mathrm{icb}}\).

\medskip

\textbf{Step 4 (Model improvement).} 
If $\rho^{q}<\eta_1$, attempt to certify that $m^{q}$ is fully linear on $B(x^{q};\Delta^{q})$. 
If certification fails, apply the same model-improvement loop as in Step 1 (Criticality check). 
Set $m^{q+1}_{\mathrm{icb}}$ to the resulting (possibly improved) model.

\medskip

\textbf{Step 5 (Trust-region radius update).}
\[
\Delta^{q+1}_{\mathrm{icb}}\in
\begin{cases}
[\Delta^{q},\ \min\{\gamma_{\mathrm{inc}}\Delta^{q},\,\Delta_{\max}\}], & \rho^{q}\ge \eta_1,\\
\{\zeta\,\Delta^{q}\}, & \rho^{q}< \eta_1 \ \text{and } m^{q}\ \text{fully linear},\\
\{\Delta^{q}\}, & \rho^{q}< \eta_1 \ \text{and } m^{q}\ \text{not certifiably fully linear}.
\end{cases}
\]
Set \(q\leftarrow q{+}1\) and return to Step 1.
\end{algorithm}

\vspace{5mm}

\subsection{Trust-Region-Based DFO: Nonsmooth Case}
For locally Lipschitz, nonsmooth objective functions $h(x)$, solving \eqref{eq:generic_opt_problem} 
with trust-region methods is more challenging, since smooth models cannot be directly employed. 
One approach is to apply smoothing techniques that approximate $h$, 
thereby enabling the use of smooth trust-region frameworks when the source of nonsmoothness is known \cite{garmanjani2016}. 
In this work, however, we adopt the alternative approach of \cite{Liuzzi2019}, which constructs max-linear surrogate models. 
This method combines ideas from bundle methods with a purely function-value-based strategy, 
making it applicable even in fully black-box settings. 
The model construction for this trust-region approach is described below.

At each iteration $q$, a nonsmooth trust-region model is built:

\begin{equation}\label{eq:nonsmooth_model}
m^q(s) = \max_{i \in \mathcal{I}^q} \left\{ f(x^q) + (g_i)^\top s - \tilde{\beta}_i^q \right\} + \tfrac{1}{2}s^\top H^q s.
\end{equation}

\noindent The index set $\mathcal{I}^q$ corresponds to a set of randomly generated, normalized directions $G^q = \{g_i : \|g_i\|=1\}$. The parameter $\tilde{\beta}_i^q$ for each direction $i \in \mathcal{I}^q$ is computed using a sample set $Y^q = \{y^j : j \in \mathcal{J}^q\}$, where the sample points satisfy $\|y^j - x^q\| \leq \gamma \Delta^q$ for a fixed constant $\gamma > 0$, as follows:
\[
\tilde{\beta}_i^q = \max_{j \in \mathcal{J}^q} \left( \max\left\{0,\, f(x^q) - f(y^j) + g_i^\top(y^j - x^q) + \delta \|y^j - x^q\|^2 \right\} \right),
\]
where $\delta > 0$ is a fixed algorithm parameter. A small $\tilde{\beta}_i^q$ indicates $g_i$ is a good approximation of a subgradient at a nearby point. The matrix $H^q$ is a symmetric Hessian approximation matrix built from quadratic interpolation or regression on the sample set $Y^q$. The iterate $x^q \in \mathbb{R}^n$ is the current best solution estimate, and $\Delta^q > 0$ is the current trust-region radius. The resulting trust-region subproblem is formulated as the following constrained quadratic program:
\begin{equation}\label{eq:reformulated_subproblem}
\min_{s, \alpha} \quad \tfrac{1}{2}s^\top H^q s + \alpha \quad \text{subject to} \quad (f(x^q) - \tilde{\beta}^q_i) + g_i^\top s \leq \alpha, \quad \forall i \in \mathcal{I}^q,\quad \|s\|^2 \leq (\Delta^q)^2.
\end{equation}

We present in Algorithm~\ref{dfo_nonsmooth_alg} a trust-region DFO algorithm adapted from \cite{Liuzzi2019} for minimizing nonsmooth locally Lipschitz functions. 
At each iteration, the algorithm constructs a max-linear surrogate model~\eqref{eq:nonsmooth_model} using sampled function values and randomly generated unit directions that approximate subgradients. 
The resulting trust-region subproblem, formulated as the convex quadratic program in~\eqref{eq:reformulated_subproblem}, is solved to obtain a trial step $s^q$. 
The Lagrange multipliers $\lambda_i^q$ from this subproblem define a convex combination of the active directions, yielding an aggregate vector 
$\tilde{g}^q = \sum_i \lambda_i^q g_i$ with $\sum_i \lambda_i^q = 1$, which serves as an approximate Clarke subgradient at $x^q$. 
A criticality test checks whether $\|\tilde{g}^q\|$ is sufficiently small relative to the trust-region radius; if so, the direction set is reset to avoid degeneracy. 
The step quality is then evaluated using a ratio test that compares actual and predicted reductions, and the trust-region radius is expanded or contracted accordingly. 
This adaptive process ensures convergence toward Clarke stationary points under some  assumptions which will be presented shortly.

\begin{algorithm}[H]
\caption{Trust-region DFO for nonsmooth subproblems}
\label{dfo_nonsmooth_alg}
\textbf{Initialization:} Choose \( x^{0} \in \mathbb{R}^n \), \( \Delta^{0} > 0 \), and constants \\
\( \bar{\eta}_1 > 0,\ \bar{\theta} > 0,\ \bar{\epsilon} > 0,\ \bar{\zeta}_1 < 1 \leq \bar{\zeta}_2,\ \bar{\zeta} > 0,\ \bar{p} > 0 \). \\
Set \( G^{0} = \emptyset \).

\textbf{For} \( q = 0, 1, 2, \ldots \):

\textbf{Step 1 (Model Construction):} \\
\quad Generate a unit vector \( g^{q} \). \\
\quad Let \( J^{q} \) index sample points \( y^{j,q} \) satisfying
\[
\|y^{j,q} - x^{q}\| \leq \bar{\zeta} \Delta^{q}, \quad \forall j \in J^{q}. 
\]
\quad Form direction set \( G^{q} = G^{q-1} \cup \{g^{q}\} \). \\
\quad Solve subproblem \eqref{eq:reformulated_subproblem} using \( G^{q} \) to obtain trial step \( s \).

\textbf{Step 2 (Subgradient Computation):} \\
\quad Let \( I^{q} \) be the index set corresponding to \( G^{q} \). \\
\quad Compute
\[
\tilde{g}^{q} = \sum_{i \in I^{q}} \lambda_i^{q} g_i,
\]
\quad where \( \lambda_i^{q} \geq 0 \) are the Lagrange multipliers associated with the constraints \( f(x^{q}) + g_i^\top s - \tilde{\beta}_i^{q} \leq \alpha \) in \eqref{eq:reformulated_subproblem}, satisfying \( \sum_{i \in I^{q}} \lambda_i^{q} = 1 \).

\textbf{Step 3 (Criticality Test):} \\
\quad If \( \|\tilde{g}^{q}\| < \bar{\epsilon} (\Delta^{q})^{1/2} \), then \\
\quad \quad Reset \( G^{q} = \{g^{q}\} \) \\
\quad \quad Re-solve subproblem \eqref{eq:reformulated_subproblem} to update \( s \).

\textbf{Step 4 (Step and Ratio Evaluation):} \\
\quad Set \( s^{q} = s \), and compute
\[
\rho^{q} = \frac{h(x^{q}) - h(x^{q} + s^{q})}{\bar{\theta} \|s^{q}\|^{\bar{p} + 1}}.
\]

\textbf{Step 5 (Step Acceptance and Radius Update):} \\
\quad \textbf{If} \( \rho^{q} \geq \bar{\eta}_1 \): \\
\quad \quad \( x^{q+1} = x^{q} + s^{q} \), \\
\quad \quad \( \Delta^{q+1} = \bar{\zeta}_2 \Delta^{q} \). \\
\quad \textbf{Else:} \\
\quad \quad \( x^{q+1} = x^{q} \), \\
\quad \quad \( \Delta^{q+1} = \bar{\zeta}_1 \Delta^{q} \).

\end{algorithm}

\vspace{1cm}

We present the convergence properties of Algorithm~\ref{dfo_nonsmooth_alg} for minimizing a locally Lipschitz continuous, potentially nonsmooth function $h$ in \eqref{eq:generic_opt_problem}, as taken from \cite{Liuzzi2019}. We first recall key concepts from nonsmooth analysis \cite{Clarke1998}. The \textit{Clarke generalized directional derivative} of $h$ at $x$ in the direction $d$ is
\begin{equation}
h^{\circ}(x; d) = \limsup_{y \to x,\, t \downarrow 0} \frac{h(y + t d) - h(y)}{t},
\label{eq:clarke_dir_derivative}
\end{equation}
and the \textit{Clarke generalized subdifferential} is the set
\begin{equation}
\partial_C h(x) = \{ \xi \in \mathbb{R}^n : h^{\circ}(x; d) \geq \xi^\top d, \ \forall d \in \mathbb{R}^n \}.
\label{eq:clarke_subdiff}
\end{equation}
A point $x^*$ is \textit{Clarke stationary} if $0 \in \partial_C h(x^*)$, which is equivalent to the condition that the generalized directional derivative is nonnegative in every direction:
\begin{equation}
h^{\circ}(x^*; d) \geq 0 \quad \text{for all } d \in \mathbb{R}^n.
\label{eq:clarke_stationarity}
\end{equation}

Compared to the classical gradient or convex subdifferential, the Clarke subdifferential provides a weaker notion of stationarity: every differentiable or convex stationary point is Clarke stationary, but not vice versa. In the nonsmooth optimization literature~\cite{Clarke1998,RockafellarWets1998}, however, convergence is typically analyzed with respect to Clarke stationarity, as it offers a broadly applicable and tractable first-order condition for locally Lipschitz functions. We formerly state the needed assumptions for the convergence of Algorithm~\ref{dfo_nonsmooth_alg} below.

\begin{assumption}\label{ass:local-lipschitz}
The function $h$ is locally Lipschitz continuous.
\end{assumption}
\begin{assumption}\label{ass:surrogate-growth}
There exist constants $\vartheta \in (0, 1/2)$, $m > 0$, and $M > 0$ such that the matrix $H^{q}$ in \eqref{eq:nonsmooth_model} satisfies
\[
\lambda_{\max}(H^{q}) \leq M (\Delta^{q})^{-\vartheta}, \quad -\lambda_{\min}(H^{q}) \leq m (\Delta^{q})^{-\vartheta}.
\]
\end{assumption}
\noindent Assumption~\ref{ass:surrogate-growth} is a key relaxation of standard trust-region theory, allowing the Hessian approximation to grow in a controlled manner to capture nonsmooth behavior. Under these assumptions, the convergence of Algorithm~\ref{dfo_nonsmooth_alg} is characterized by the following lemma, which comes from \cite{Liuzzi2019}.

\vspace{2mm}

\begin{lemma}\label{lem:nonsmooth_convergence}
Let Assumptions~\ref{ass:local-lipschitz} and~\ref{ass:surrogate-growth} hold. Then:
\begin{enumerate}
    \item $\lim_{q \to \infty} \Delta^{q} = 0$.
    \item For any limit point $x^{*}$ of $\{x^{q}\}$ and limit direction $s^{*}$ of $\{s^{q}/\|s^{q}\|\}$, $h^{\circ}(x^{*}; s^{*}) \geq 0$.
    \item If $\{g^{q}\}$ is asymptotically dense in the unit sphere, then every limit point $x^{*}$ is Clarke stationary: $0 \in \partial_C h(x^{*})$.
\end{enumerate}
\end{lemma}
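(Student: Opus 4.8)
The plan is to mirror the three-part structure of the statement, using the standard trust-region dichotomy as the backbone and the definition of the Clarke generalized directional derivative \eqref{eq:clarke_dir_derivative} as the analytic engine. Throughout I classify each iteration as \emph{successful} ($\rho^{q}\ge\bar\eta_1$: the step is accepted and the radius inflated by $\bar\zeta_2$) or \emph{unsuccessful} ($\rho^{q}<\bar\eta_1$: the step is rejected and the radius contracted by $\bar\zeta_1<1$). I will also use that the trust-region constraint $\|s\|^2\le(\Delta^q)^2$ forces $\|s^q\|\le\Delta^q$.

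For claim (1), I would first exploit the ratio test: every successful iteration produces a strict decrease $h(x^q)-h(x^{q+1})\ge\bar\eta_1\bar\theta\|s^q\|^{\bar p+1}$. Since $h$ is bounded below (the functions in \eqref{eq:sep_opt_problem} are lower bounded, and this is inherited by the subproblem objectives $h$ in \eqref{eq:generic_opt_problem}), summing over successful iterations gives $\sum\|s^q\|^{\bar p+1}<\infty$, so $\|s^q\|\to0$ along successful steps. I would then argue by contradiction that $\limsup_q\Delta^q=0$: if the radius stayed bounded away from zero, Assumption~\ref{ass:surrogate-growth} (which caps the curvature of the model \eqref{eq:nonsmooth_model} at the controlled rate $(\Delta^q)^{-\vartheta}$ with $\vartheta<1/2$, so the quadratic term cannot overwhelm the linear one) forces the subproblem \eqref{eq:reformulated_subproblem} to return steps with $\|s^q\|$ bounded below, hence infinitely many uniformly bounded decreases, contradicting lower boundedness. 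The remaining task is to promote $\limsup_q\Delta^q=0$ to the full limit, using that unsuccessful steps contract geometrically while the aggregate inflation from successful steps is summable.

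For claim (2), the key observation is that $\bar p>0$ makes the predicted-reduction denominator superlinear in $\|s^q\|$. Fix a subsequence of \emph{unsuccessful} iterations along which $x^q\to x^*$ and $s^q/\|s^q\|\to s^*$ (such iterations are infinite by claim (1), since only they contract the radius). A failed ratio test gives $h(x^q+s^q)-h(x^q)>-\bar\eta_1\bar\theta\|s^q\|^{\bar p+1}$, so dividing by $\|s^q\|\downarrow0$,
\[
\liminf_{q}\frac{h(x^q+s^q)-h(x^q)}{\|s^q\|}\ge 0.
\]
Replacing the argument $s^q$ by $\|s^q\|\,s^*$ costs only $o(\|s^q\|)$ by local Lipschitz continuity (Assumption~\ref{ass:local-lipschitz}), and since $x^q\to x^*$ with $\|s^q\|\downarrow0$, the resulting difference quotients are admissible in the $\limsup$ defining $h^{\circ}(x^*;s^*)$ in \eqref{eq:clarke_dir_derivative}; hence $h^{\circ}(x^*;s^*)\ge0$.

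For claim (3), I would upgrade the directional statement of (2) from realized step directions to \emph{all} directions using the asymptotic density of $\{g^q\}$. The bridging mechanism is the criticality reset: when $\|\tilde g^q\|<\bar\epsilon(\Delta^q)^{1/2}$ fires, it collapses the direction set to the single vector $g^q$, so the corresponding step aligns with $-g^q$ up to a curvature correction of order $(\Delta^q)^{2-\vartheta}=o(\Delta^q)$ (again controlled by Assumption~\ref{ass:surrogate-growth}); density of $\{g^q\}$ then renders the limit step directions dense in the unit sphere. Because $d\mapsto h^{\circ}(x^*;d)$ is finite and Lipschitz for locally Lipschitz $h$, nonnegativity on a dense set of directions (from (2)) extends to every $d\in\mathbb{R}^n$, i.e. $h^{\circ}(x^*;d)\ge0$ for all $d$, which by \eqref{eq:clarke_subdiff}--\eqref{eq:clarke_stationarity} is exactly $0\in\partial_C h(x^*)$. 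I expect the hardest step to be precisely this bridge: rigorously tying density of the \emph{generated} probe directions $\{g^q\}$ to density of the \emph{limit step directions} $s^*$ of claim (2), since $s^q$ solves the coupled program \eqref{eq:reformulated_subproblem} rather than being a free direction. Handling it requires tracking the aggregate approximate subgradient $\tilde g^q=\sum_i\lambda_i^q g_i$ and the offsets $\tilde\beta_i^q$, showing that as $\Delta^q\to0$ the offsets vanish so each active $g_i$ becomes a genuine Clarke subgradient direction and the reset indeed samples a dense set, with Assumption~\ref{ass:surrogate-growth} keeping the quadratic perturbation negligible throughout the limit.
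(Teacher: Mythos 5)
First, a structural point: the paper does not prove Lemma~\ref{lem:nonsmooth_convergence} at all — it is imported verbatim from \cite{Liuzzi2019} (``as taken from \cite{Liuzzi2019}''), so there is no in-paper proof to compare against, and your attempt has to be judged against the argument in that reference. Measured that way, your parts (1) and (2) follow essentially the known route: sufficient decrease $h(x^q)-h(x^{q+1})\ge \bar\eta_1\bar\theta\|s^q\|^{\bar p+1}$ on successful iterations plus lower boundedness of $h$ (correctly flagged as a standing assumption of the problem setup rather than of the lemma itself) gives summability of the step norms; the curvature cap of Assumption~\ref{ass:surrogate-growth} converts this into $\Delta^q\to 0$; and the failed ratio test with superlinear denominator ($\bar p>0$), combined with local Lipschitz continuity to swap $s^q$ for $\|s^q\|s^*$ at cost $o(\|s^q\|)$, gives part (2). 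Two points need tightening in part (1): your contradiction argument, as written (``the radius stays bounded away from zero''), establishes only $\liminf_q\Delta^q=0$, so the promotion step you defer is genuinely needed; and the lower bound on $\|s^q\|$ when $\Delta^q$ is bounded below does not follow from Assumption~\ref{ass:surrogate-growth} alone — it requires the criticality reset of Algorithm~\ref{dfo_nonsmooth_alg}, which guarantees $\|\tilde g^q\|\ge\min\{1,\bar\epsilon(\Delta^q)^{1/2}\}$ when the ratio test is reached; without it, a tiny aggregate $\tilde g^q$ could produce arbitrarily short steps inside a large trust region.

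The genuine gap is in part (3). Your bridge — density of the probe directions $\{g^q\}$ implies density of the limit step directions $s^*$ appearing in part (2) — does not hold as stated. The step $s^q$ solves the coupled program \eqref{eq:reformulated_subproblem}, so its direction is dictated by the aggregate $\tilde g^q=\sum_i\lambda_i^q g_i$ and the curvature $H^q$, not by the newest probe $g^q$; the criticality reset, which is the only mechanism that collapses the bundle onto a single $g^q$, fires only on an algorithm-determined subsequence of iterations, and density of $\{g^q\}$ along the full sequence is not inherited by that subsequence. Consequently you cannot conclude that the realized limit step directions are dense in the unit sphere, and the Lipschitz-extension argument for $d\mapsto h^\circ(x^*;d)$ then has nothing to extend from. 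The proof in \cite{Liuzzi2019} avoids this dependence on part (2) altogether: since the newly generated direction $g^q$ is always a member of the bundle at iteration $q$, the optimality of $s^q$ for the model \eqref{eq:nonsmooth_model} can be tested against trial steps along $g^q$ itself, and the failed ratio test on unsuccessful iterations yields an inequality involving $g^q$ directly; passing to a subsequence with $g^q\to d$ (available for \emph{every} unit $d$ by density) and $x^q\to x^*$ gives $h^\circ(x^*;d)\ge 0$ for arbitrary $d$, which is Clarke stationarity by \eqref{eq:clarke_subdiff}--\eqref{eq:clarke_stationarity}. To salvage your route you would have to show both that the offsets $\tilde\beta_i^q$ of the active directions vanish and that the resets sample a dense set of directions — and the latter is precisely what is not available.
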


Lemma~\ref{lem:nonsmooth_convergence} provides the theoretical justification for using the trust-region radius as a surrogate stationarity measure in nonsmooth settings, which will be exploited in Section~\ref{sec:algo}.

\subsection{Exact and Inexact ADMM}

The Lagrangian \(L\) and augmented Lagrangian \(L_{\beta}\) of the problem in \eqref{eq:standard_admm_form} are  given as:
\begin{equation}\label{eq:lagrangian}
L(x, \overline{x}; \lambda) = f(x) + g(\overline{x}) + \lambda^T (Ax + B\overline{x} - b)
\end{equation}
\vspace{-2mm}
\begin{equation}\label{eq:augmented_lagrangian}
L_{\beta}(x, \overline{x}; \lambda) = f(x) + g(\overline{x}) + \lambda^T (Ax + B\overline{x} - b) + \frac{\beta}{2} \|Ax + B\overline{x} - b\|^2
\end{equation}

\noindent where \(\lambda \in \mathbb{R}^m\) are the dual variables, and \(\beta > 0\) is a penalty parameter. The classical ADMM involves solving the minimization subproblems below exactly:
\begin{enumerate}
    \item $x$-update: 
    \begin{equation}\label{eq:x_update}
    x^{k+1} = \arg \min_x L_{\beta}(x, \overline{x}^k; \lambda^k);
    \end{equation}
    \item $\overline{x}$-update: 
    \begin{equation}\label{eq:xbar_update}
    \overline{x}^{k+1} = \arg \min_{\overline{x}} L_{\beta}(x^{k+1}, \overline{x}; \lambda^k);
    \end{equation}
    \item Dual update:
    \begin{equation}\label{eq:dual_update}
    \lambda^{k+1} = \lambda^k + \beta (A x^{k+1} + B \overline{x}^{k+1} - b).
    \end{equation}
\end{enumerate}
 \vspace{-2mm}
Inexact ADMM, on the other hand, allows these subproblems to be solved approximately to a specified error tolerance, which is crucial for reducing the computational cost per iteration, especially when the subproblems are themselves challenging optimization tasks. Rather than requiring exact minimizers, the $x$- and $\overline{x}$-updates are solved such that the distance to optimality—measured, for example, by the norm of the (sub)gradient of the augmented Lagrangian—is controlled by an error tolerance $\epsilon^k$ that diminishes across iterations. Two common criteria are used: an \textit{absolute error} criterion where $\epsilon^k$ forms a summable sequence (i.e., $\sum_{k=0}^\infty \epsilon^k < \infty$), or a \textit{relative error} criterion where $\epsilon^k$ is related to the algorithm's progress, such as the norm of the difference between successive iterates. These carefully chosen diminishing tolerances ensure that the approximation errors converge to zero asymptotically, guaranteeing that the sequence $\{(x^k, \overline{x}^k, \lambda^k)\}$ converges to a KKT point of the original problem. As this inexact approach is adopted in this work, more details will be provided in the next section.

\subsection{A Two-Level Distributed Algorithm}

To remove the coupling restrictions on \(A\) and \(B\) in standard nonconvex ADMM formulations, \cite{sun2023} proposed adding slack variables to the problem in~\eqref{eq:standard_admm_form}, yielding the equivalent problem below:

\begin{equation}
\begin{aligned}
\min_{(x, \overline{x}, z)} & \, f(x) + g(\overline{x}) \\
\text{s.t.} \quad & Ax + B\overline{x} + z = b, \quad z = 0.
\end{aligned}
\label{eq:slack_var_problem}
\end{equation}
\vspace{3mm}
\noindent Equation \eqref{eq:slack_var_problem} can be relaxed to yield:

\vspace{-3mm}
\begin{equation}
\begin{aligned}
\min_{(x, \overline{x}, z)} & \, f(x) + g(\overline{x}) + \lambda^T z + \frac{\beta}{2} \|z\|^2 \\
\text{s.t.} \quad & Ax + B\overline{x} + z = b.
\end{aligned}
\label{eq:relaxed_problem}
\end{equation}

\noindent The resulting augmented Lagrangian for the relaxed problem is:
\begin{equation}
\begin{aligned}
L(x, \overline{x}, z, y; \lambda, \rho, \beta)
&= f(x) + g(\overline{x}) + y^T (Ax + B\overline{x} + z - b)  \\
&\quad + \frac{\rho}{2}\|Ax + B\overline{x} + z - b\|^2 
   + \lambda^T z + \frac{\beta}{2}\|z\|^2.
\end{aligned}
\label{eq:two_level_lagrangian}
\end{equation}

\noindent Here, \( y \in \mathbb{R}^m \) and \( \lambda \in \mathbb{R}^m \) are dual variables, while \( \rho > 0 \) and \( \beta > 0 \) are penalty parameters.

In \cite{sun2023}, a two-layer structure is proposed, with an outer loop (indexed by \( k \)) and an inner loop (indexed by \( r \)). In the inner loop, the classical ADMM updates \( x \), \( \overline{x} \), \( z \), and \( y \) sequentially according to
\begin{equation}
\begin{aligned}  
x^{k, r+1} &= \arg\min_x L(x, \overline{x}^{k, r}, z^{k, r}, y^{k, r}; \lambda^k, \rho^k, \beta^k) \\
\overline{x}^{k, r+1} &= \arg\min_{\overline{x}} L(x^{k, r+1}, \overline{x}, z^{k, r}, y^{k, r}; \lambda^k, \rho^k, \beta^k) \\ 
z^{k, r+1} &= \arg\min_z L(x^{k, r+1}, \overline{x}^{k, r+1}, z, y^{k, r}; \lambda^k, \rho^k, \beta^k) \\
&\quad = -\frac{\rho^k}{\rho^k + \beta^k} \left( A x^{k, r+1} + B \overline{x}^{k, r+1} + \frac{y^{k, r}}{\rho^k}-b \right) - \frac{1}{\rho^k + \beta^k} \lambda^k \\
y^{k, r+1} &= y^{k, r} + \rho^k \left( A x^{k, r+1} + B \overline{x}^{k, r+1} + z^{k, r+1}-b \right)
\end{aligned}
\label{eq:inner_updates}
\end{equation}
while \( \lambda^k \), \( \beta^k \) and \( \lambda^k \) remain fixed. \cite{sun2023} showed that the inner iterations converge to the stationary point of the relaxed problem in \eqref{eq:relaxed_problem} under mild assumptions and \(\rho^k = 2\beta^k\). In the outer iterations, the dual variables and the corresponding penalty parameters are updated according to
\begin{equation}
\begin{aligned}
\lambda^{k+1} &= \Pi_{[\underline{\lambda},\overline{\lambda}]}\left(\lambda^k + \beta^k z^k\right), \\
\beta^{k+1} &= \begin{cases} 
\gamma \beta^k & \text{if } \|z^k\| > \omega \|z^{k-1}\| \\
\beta^k & \text{otherwise},
\end{cases}
\end{aligned}
\label{eq:outer_updates}
\end{equation}
where $\gamma > 1$ controls the penalty amplification rate when the slack variables do not sufficiently decrease, with $\omega \in (0,1)$ defining the convergence threshold. The projection operator $\Pi$ ensures that the dual variables $\lambda$ remain within the prescribed compact hypercube $[\underline{\lambda},\overline{\lambda}]$, which is a standard technique to guarantee the boundedness of the dual sequence and thus the stability of the augmented Lagrangian method. \noindent In summary, the two-level distributed algorithm solves the relaxed problem 
\eqref{eq:relaxed_problem} via ADMM updates in the inner layer, while the outer layer, 
implemented through the method of multipliers, drives the slack variables $z$ to zero. 
This decomposition eliminates the restrictive assumptions on $A$ and $B$ required in standard nonconvex ADMM analyses.

\vspace{-1mm}

\section{Proposed Algorithm}\label{sec:algo}
\vspace{-2mm}
\subsection{Overview}

We propose a two-layer inexact ADMM algorithm (Algorithm~\ref{proposed_alg}) for solving the derivative-free optimization problem in \eqref{eq:sep_opt_problem}. The algorithm's structure, depicted in Figure~\ref{fig:2_level_ADMM}, extends the framework from \cite{tang2022,sun2023} to derivative-free settings. The core idea is to decompose the problem into manageable subproblems for the blocks \(x_i\) and \(\bar{x}\), which are then solved inexactly using trust-region DFO methods, treating the objectives as black-box functions.

The subproblems for each variable block at inner iteration \(r\) of outer iteration \(k\) are defined by the following objectives:
\vspace{-3mm}
\begin{equation}\label{eq:x_subproblem_obj}
I_i(x_i) = f_i(x_i) 
    + \tfrac{\rho^k}{2}\Big\| A_i x_i 
      + \sum_{\substack{j=1 \\ j \ne i}}^{N} A_j x_j^{k,r} 
      + B \bar{x}^{k,r} + z^{k,r} + \tfrac{y^{k,r}}{\rho^k} \Big\|^2,
\end{equation}
\vspace{-3mm}
\begin{equation}\label{eq:xbar_subproblem_obj}
J(\bar{x}) = g(\bar{x}) + \frac{\rho^k}{2} \left\| A x^{k, r+1} + B \bar{x} + z^{k, r} + \frac{y^{k, r}}{\rho^k} \right\|^2.
\end{equation}

\begin{figure}[h]
    \centering
    \includegraphics[width=0.9\textwidth]{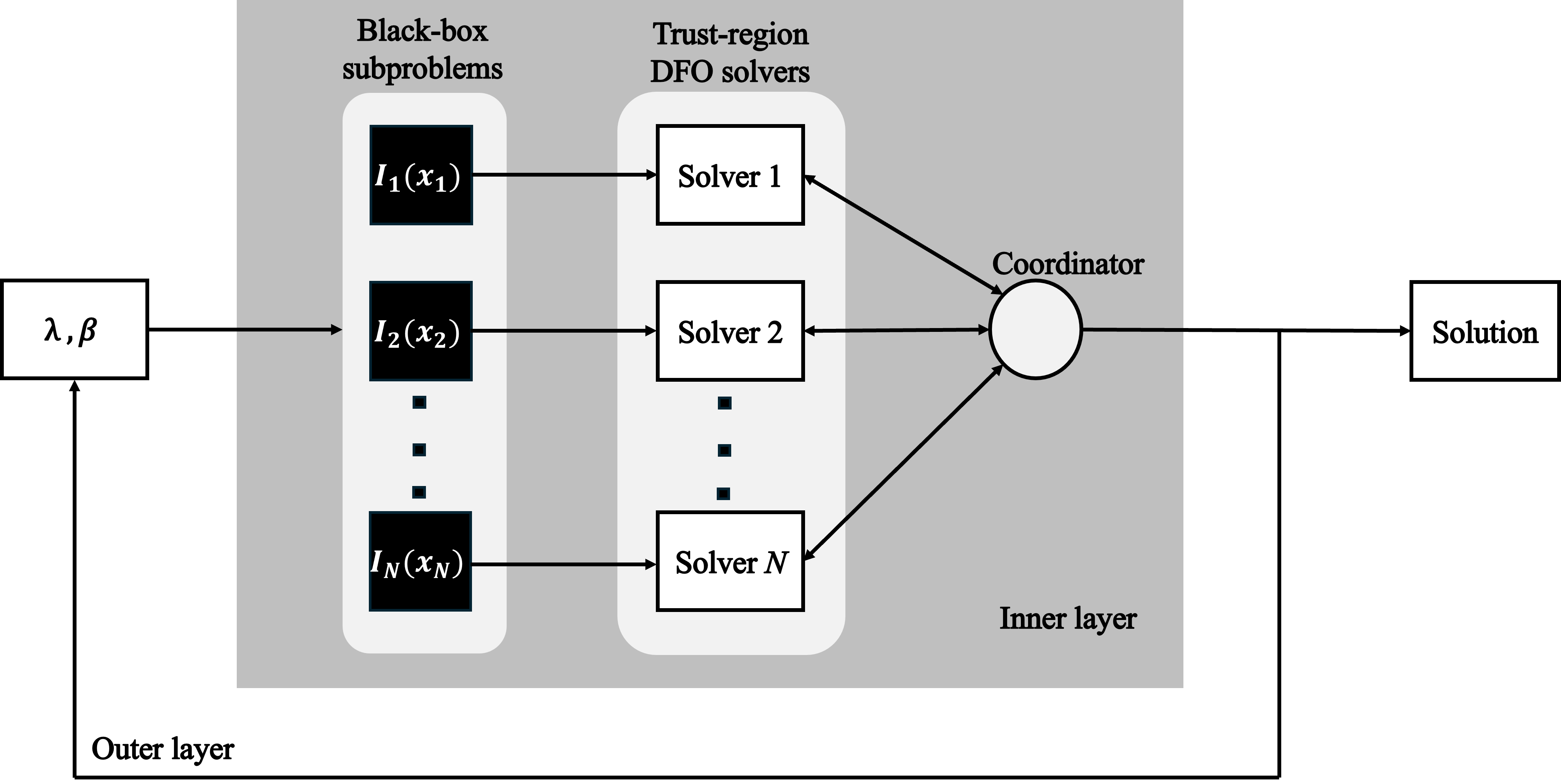}
    \caption{An illustration of the two-layer ADMM-DFO algorithm.}
    \label{fig:2_level_ADMM}
\end{figure}

\noindent We make the following standard assumptions to ensure the problem is well-posed and to facilitate convergence analysis.

\vspace{-3mm}

\begin{assumption}\label{ass:fi-and-g-lower-bounded}
Each function $f_i$ and $g$ is lower bounded.
\end{assumption}

\begin{assumption}\label{ass:closed_form_solution}
The function \(g\) is convex, and the minimization of the augmented Lagrangian with respect to \(\bar{x}\) in \eqref{eq:xbar_subproblem_obj} admits a unique closed-form solution, denoted by
\[
\bar{x}^{k,r+1} = G\!\left(B,\; A x^{k,r+1} + z^{k,r} + \tfrac{y^{k,r}}{\rho^k},\; \rho^k\right).
\]
\end{assumption}
\noindent Assumption~\ref{ass:closed_form_solution} is mild and holds for many practical problems, such as when \(g = 0\) or when \(g\) is a regularizer for which the proximal operator is known and \(B\) has full column rank.

\begin{algorithm}
\caption{ DFO-Based Two-Layer Inexact ADMM }
\label{proposed_alg}
\SetKwInOut{Input}{Input}
\SetKwInOut{Output}{Output}

\SetAlgoLined 

\Input{Bound of dual variables $[ \underline{\lambda}, \overline{\lambda} ]$, shrinking ratio $\omega \in [0, 1)$, amplifying ratio $\gamma > 1$, tolerances $\left\{ \epsilon_{1,2,3,4}^k \right\}_{k=1}^{\infty} \downarrow 0$, terminating tolerances $\epsilon_{1,2,3,4} > 0$}
\Output{Optimized variables $x^{k+1}, \bar{x}^{k+1}, z^{k+1}, y^{k+1}$}

\BlankLine
\textbf{Initialization:} Starting points $x^0, \bar{x}^0, z^0$, duals and bounds $\lambda^1 \in [ \underline{\lambda}, \overline{\lambda} ]$, penalty $\beta^1 > 0$\;
Outer iteration count $k \leftarrow 0$\;

\While{$\epsilon_4^k \geq \epsilon_4$ or stationarity criterion \eqref{eq:outer_optimality} is not met}{
    \textbf{Set:} Inner tolerances $\{\epsilon_4^{k,r}\}_{r=1}^\infty$; \\
    $\rho^k = 2\beta^k$\;
    Inner iteration count $r \leftarrow 0$\;
    \textbf{Initialization:} $x^{k,0}, \bar{x}^{k,0}, z^{k,0}, y^{k,0}$ satisfying $\lambda^k + \beta^k z^{k,0} + y^{k,0} = 0$\;

    \While {stopping criterion \eqref{eq:inner_stop_criteria} is not met}{
   \For{$i = 1, \ldots, N$}{
    Solve approximately (using Algorithm~\ref{dfo_smooth_alg} or Algorithm~\ref{dfo_nonsmooth_alg}):
    \[
    x_i^{k,r+1} \;\approx\; 
    \arg\min_{x_i}\; f_i(x_i) 
    + \tfrac{\rho^k}{2}\Big\| A_i x_i 
      + \sum_{\substack{j=1 \\ j \ne i}}^{N} A_j x_j^{k,r} 
      + B \bar{x}^{k,r} + z^{k,r} + \tfrac{y^{k,r}}{\rho^k} \Big\|^2
    \]
    to a trust–region radius tolerance of $\epsilon_{4}^{k,r}$.
}

        Update $\bar{x}^{k,r+1}$, $z^{k,r+1}$, and $y^{k,r+1}$ as:
\[
\begin{aligned}
\bar{x}^{k,r+1} &= G\!\left(B,\, A x^{k,r+1} + z^{k,r} + \tfrac{y^{k,r}}{\rho^k},\, \rho^k\right), \\[4pt]
z^{k,r+1} &= -\tfrac{\rho^k}{\rho^k + \beta^k}\!\left(Ax^{k,r+1} + B\bar{x}^{k,r+1} + \tfrac{y^{k,r}}{\rho^k}\right)
            - \tfrac{1}{\rho^k + \beta^k}\lambda^k, \\[4pt]
y^{k,r+1} &= y^{k,r} + \rho^k\!\left(Ax^{k,r+1} + B\bar{x}^{k,r+1} + z^{k,r+1}\right).
\end{aligned}
\]

        \( r \leftarrow r + 1 \)\;
    }
    \(\left( x^{k+1}, \bar{x}^{k+1}, z^{k+1}, y^{k+1} \right) \leftarrow \left( x^{k,r}, \bar{x}^{k,r}, z^{k,r}, y^{k,r} \right)\)\;

    Update the dual variable \(\lambda^{k+1}\) and the penalty parameter \(\beta^{k+1}\) according to:
    \[
    \lambda^{k+1} = \Pi_{\substack{[\lambda, \bar{\lambda}] \\ }} (\lambda^k + \beta^k z^k), \quad \beta^{k+1} = 
    \begin{cases} 
    \gamma \beta^k, & \|z^k\| > \omega \|z^{k-1}\| \\
    \beta^k, & \|z^k\| \leq \omega \|z^{k-1}\|
    \end{cases}
    \]

    \( k \leftarrow k + 1 \)\;
}
\end{algorithm}

\FloatBarrier

The main computational challenge lies in solving the black-box \(x_i\)-subproblems \eqref{eq:x_subproblem_obj}, as obtaining an exact minimizer is often impractical or prohibitively expensive. We therefore employ an \textit{inexact} minimization strategy. However, in derivative-free settings, standard first-order optimality measures, such as the gradient norm, are unavailable for tracking proximity to stationarity. This is where the convergence properties of trust-region DFO algorithms (Algorithms~\ref{dfo_smooth_alg} and~\ref{dfo_nonsmooth_alg}) become crucial: the trust-region radius serves as a practical and theoretically justified surrogate for quantifying the stationarity gap.

Now we introduce a three-level indexing \((k,r,q)\) for notational clarity, as this will be featured in the subsequent analysis. 
Here, \(k\) denotes the outer iteration, \(r\) the inner ADMM iteration, and \(q\) the trust-region iteration within a subproblem. 
In the inner loop of Algorithm~\ref{proposed_alg}, each \(x_i\)-subproblem is solved inexactly by a trust-region method, producing iterates \(x_i^{k,r,q}\). 
The trust-region solver terminates once the trust-region radius falls below a tolerance \(\epsilon_4^{k,r}\). 
Outer and inner iterations are governed by externally defined sequences of tolerances \(\{\epsilon_{1:4}^k\}\downarrow 0\) and \(\{\epsilon_{1:4}^{k,r}\}\), which together control deviations from optimality conditions and constraint violation. 
This framework ensures that each trust-region subproblem is solved only to the precision required at its stage of the algorithm, while the overall scheme converges globally.

\subsection{Convergence Analysis of Algorithm  \ref{proposed_alg}}\label{subsec:convergence}

\begin{lemma}[Smooth case: gradient norm--trust-region radius bound]
\label{lem:grad_norm_tr_bound}
For an inner subproblem $(k,r)$ of Algorithm~\ref{proposed_alg}, suppose that Algorithm~\ref{dfo_smooth_alg} produces trust-region iterates $\{x^{k,r,q}\}_q$ and terminates at index $q^\star$ with $x^{k,r+1}=x^{k,r,q^\star}$ and $\Delta^{k,r+1}=\Delta^{k,r,q^\star}$. Assume that the termination test is evaluated after the Criticality check (Step~\ref{step:1.1}--\ref{step:1.2}) of Algorithm~\ref{dfo_smooth_alg}, with the inner tolerance $\varepsilon^{k,r}_4$ sufficiently small so that $\|g_{\mathrm{icb}}^{k,r,q^\star}\|\le \varepsilon_c$ at termination (hence Step~\ref{step:1.2} of Algorithm~\ref{dfo_smooth_alg} is active). Let $\kappa_{eg}>0$ denote the full-linearity gradient constant,  $\tau\in(0,1)$ and $\mu>0$ the criticality constants from Algorithm~\ref{dfo_smooth_alg}, and define $d_4^k:=\nabla h(x^{k,r+1})$. Then
\begin{equation}
\label{eq:finalC}
\|d_4^k\|\ \le\ C_1(\tau,\mu,\kappa_{eg})\,\Delta^{k,r+1},
\qquad
C_1(\tau,\mu,\kappa_{eg})\ :=\ \max\!\Big\{\frac{1+\kappa_{eg}\mu}{\tau},\ \kappa_{eg}+\frac{1}{\mu}\Big\}.
\end{equation}

\end{lemma}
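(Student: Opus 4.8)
The plan is to turn the termination state of Algorithm~\ref{dfo_smooth_alg} into an a~posteriori bound on $\|\nabla h(x^{k,r+1})\|$ using only two facts: the model returned by the criticality check is fully linear, and that check ties the trust-region radius to the model-gradient norm. Write $g^{q^\star}:=\nabla m^{q^\star}(x^{k,r+1})$ for the final active model gradient, and let $\tilde\Delta$ be the radius on which $m^{q^\star}$ is certified fully linear, so $\tilde\Delta\le\Delta^{k,r+1}$. Since the hypothesis forces $\|g^{k,r,q^\star}_{\mathrm{icb}}\|\le\varepsilon_c$, termination happens inside Step~\ref{step:1.2}, so such a fully linear $m^{q^\star}$ is indeed available.

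The single analytic ingredient is the full-linearity estimate \eqref{eq:grad-approx} evaluated at $s=0$, which gives $\|\nabla h(x^{k,r+1})-g^{q^\star}\|\le\kappa_{eg}\tilde\Delta$ and hence, by the triangle inequality, $\|d_4^k\|\le\|g^{q^\star}\|+\kappa_{eg}\tilde\Delta$. The task then reduces to bounding both $\|g^{q^\star}\|$ and $\tilde\Delta$ by multiples of the reported radius $\Delta^{k,r+1}$, using the two relations the criticality loop enforces: the exit test $\tilde\Delta\le\mu\|g^{q^\star}\|$ and the radius assignment $\Delta^{k,r+1}=\min\{\max(\tilde\Delta,\tau\|g^{q^\star}\|),\Delta^{q^\star}_{\mathrm{icb}}\}$.

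First I would dispatch the branch in which the incumbent radius does not truncate the update, i.e.\ $\Delta^{k,r+1}=\max(\tilde\Delta,\tau\|g^{q^\star}\|)$. Here $\Delta^{k,r+1}\ge\tau\|g^{q^\star}\|$ gives $\|g^{q^\star}\|\le\Delta^{k,r+1}/\tau$, and the exit test gives $\tilde\Delta\le\mu\|g^{q^\star}\|$; substituting both into $\|d_4^k\|\le\|g^{q^\star}\|+\kappa_{eg}\tilde\Delta\le(1+\kappa_{eg}\mu)\|g^{q^\star}\|$ yields $\|d_4^k\|\le\frac{1+\kappa_{eg}\mu}{\tau}\,\Delta^{k,r+1}$, the first constant in \eqref{eq:finalC}. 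The complementary branch, where the assignment is capped by $\Delta^{q^\star}_{\mathrm{icb}}$, is expected to account for the second constant $\kappa_{eg}+1/\mu$, and taking $C_1$ to be the maximum of the two constants then makes \eqref{eq:finalC} hold irrespective of which branch is active.

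I expect this capped branch to be the main obstacle. When $\Delta^{q^\star}_{\mathrm{icb}}<\max(\tilde\Delta,\tau\|g^{q^\star}\|)$ the convenient bound $\|g^{q^\star}\|\le\Delta^{k,r+1}/\tau$ is lost, and the relation the criticality step supplies, $\Delta^{k,r+1}\le\mu\|g^{q^\star}\|$, only lower-bounds $\|g^{q^\star}\|$ by $\Delta^{k,r+1}/\mu$ -- the wrong direction for an upper bound on $\|d_4^k\|$. Closing this branch therefore requires extra structure: one must exploit that termination is declared only after $\varepsilon^{k,r}_4$, and hence $\Delta^{k,r+1}$, is small, and argue that a genuine termination point cannot sit in the capped regime with a large model gradient (otherwise the step-acceptance and radius-expansion logic of Algorithm~\ref{dfo_smooth_alg} would have enlarged the radius rather than driving it below tolerance). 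Making this reconciliation precise -- so that $\|g^{q^\star}\|$ is controlled by $\Delta^{k,r+1}$ even when the incumbent radius is the active cap -- is the crux; once it is settled, the estimate giving $\kappa_{eg}+1/\mu$ closes and \eqref{eq:finalC} follows by taking the maximum.
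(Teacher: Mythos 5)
Your uncapped branch is exactly the paper's argument (its Case~1, inner-maximum sub-case), with the same constant $(1+\kappa_{eg}\mu)/\tau$. The genuine gap is the capped branch, which you correctly single out as the crux but do not close; moreover, the route you sketch for closing it --- appealing to the smallness of $\varepsilon^{k,r}_4$ and to the step-acceptance/radius-expansion logic of Algorithm~\ref{dfo_smooth_alg} --- is not what the proof needs and would be hard to make rigorous, since those mechanisms say nothing about the model gradient at the final criticality check. The idea you are missing is purely local to the model-improvement loop: stop trying to bound the gradient of the \emph{final} model $g^{q^\star}$, and instead invoke full linearity for the model built at the \emph{penultimate} improvement sub-iterate $u-1$. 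The loop continued past $u-1$ precisely because its exit test failed there, i.e.
\[
\mu\,\|g^{u-1}\| \;<\; \tilde\Delta^{u-1} \;=\; \alpha^{u-2}\Delta_{\mathrm{icb}} \;\le\; \Delta_{\mathrm{icb}} \;=\; \Delta^{k,r+1},
\]
which is an \emph{upper} bound $\|g^{u-1}\|\le \Delta^{k,r+1}/\mu$ --- exactly the direction you observed was unavailable for $g^{q^\star}$. Since that penultimate model is certified fully linear on a ball of radius $\alpha^{u-2}\Delta_{\mathrm{icb}}\le\Delta^{k,r+1}$ centered at the same point $x^{k,r+1}$, inequality \eqref{eq:grad-approx} applied to \emph{it} gives
\[
\|\nabla h(x^{k,r+1})\| \;\le\; \|g^{u-1}\| + \kappa_{eg}\,\Delta^{k,r+1} \;\le\; \Big(\tfrac{1}{\mu}+\kappa_{eg}\Big)\Delta^{k,r+1},
\]
the second constant in $C_1$. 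Nothing forces you to use the model the algorithm returns: any certified model at the current center may be used to bound the true gradient, and the penultimate one is the one whose gradient is controlled by the radius.

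Two further remarks. First, your case split only covers Step~\ref{step:1.2a} (the improvement loop runs); the paper treats Step~\ref{step:1.2b} (incumbent model used directly, satisfying $\Delta_{\mathrm{icb}}\le\mu\|g_{\mathrm{icb}}\|$) as a separate Case~2, which your decomposition silently omits. Second, your diagnosis that $\Delta\le\mu\|g\|$ runs ``the wrong direction'' for an upper bound is sound, and in fact it applies verbatim to the paper's own Case~2, which derives $\|\nabla h\|\le(\kappa_{eg}+1/\mu)\Delta_{\mathrm{icb}}$ from precisely that inequality; that step of the published proof shares the defect you identified, whereas the capped branch of Case~1 is closed rigorously by the penultimate-sub-iterate argument above.
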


\begin{proof}
By the lemma assumption, termination occurs in Step~\ref{step:1.2} of Algorithm~\ref{dfo_smooth_alg}, where the model at $x^{k,r+1}$ is certifiably fully linear on $B(x^{k,r+1};\Delta^{k,r+1})$. 
This full linearity property is essential for establishing a bound on the true gradient norm in terms of the trust-region radius. 
Recall that whenever $m$ is fully linear on $B(x^{k,r+1};\Delta)$, the following inequality holds:
\begin{equation}
\label{eq:FL}
\|\nabla h(x^{k,r+1}) - \nabla m(x^{k,r+1})\|\ \le\ \kappa_{eg}\,\Delta.
\end{equation}

\textbf{Case 1 (Step~\ref{step:1.2a}).}
After the model-improvement loop, there exists a certified pair $(\tilde m^{k,r,q^\star},\tilde\Delta^{k,r,q^\star})$ that is fully linear on $B(x^{k,r,q^\star};\tilde\Delta^{k,r,q^\star})$ with
\begin{equation}
\label{eq:stop}
\tilde\Delta^{k,r,q^\star}\ \le\ \mu\,\|\tilde g^{k,r,q^\star}\|,
\qquad
\tilde g^{k,r,q^\star}:=\nabla\tilde m^{k,r,q^\star}(x^{k,r,q^\star}),
\end{equation}
and Step~1 selects
\begin{equation}
\label{eq:clamp}
\Delta^{k,r,q^\star}=\min\!\Big\{\max\!\big(\tilde\Delta^{k,r,q^\star},\ \tau\|\tilde g^{k,r,q^\star}\|\big),\ \Delta_{\mathrm{icb}}^{k,r,q^\star}\Big\}.
\end{equation}
From \eqref{eq:FL} with $m=\tilde m^{k,r,q^\star}$ and $\Delta=\tilde\Delta^{k,r,q^\star}$,
\begin{equation}
\label{eq:FLcenterCase1}
\|\nabla h(x^{k,r+1})\|\ \le\ \|\tilde g^{k,r,q^\star}\|+\kappa_{eg}\,\tilde\Delta^{k,r,q^\star}.
\end{equation}

\noindent If the inner maximum in \eqref{eq:clamp} is selected, then
\begin{equation}
\label{eq:innermax_bounds}
\|\tilde g^{k,r,q^\star}\|\ \le\ \frac{\Delta^{k,r+1}}{\tau},
\qquad
\tilde\Delta^{k,r,q^\star}\ \le\ \Delta^{k,r+1}.
\end{equation}
In addition, using \eqref{eq:stop} together with the first inequality in
\eqref{eq:innermax_bounds}, we also obtain
\[
\tilde\Delta^{k,r,q^\star}\ \le\ \mu\,\frac{\Delta^{k,r+1}}{\tau}.
\]
Substituting these bounds into \eqref{eq:FLcenterCase1} gives
\begin{equation}
\label{eq:case1a}
\|\nabla h(x^{k,r+1})\|\ \le\ \frac{1+\kappa_{eg}\mu}{\tau}\,\Delta^{k,r+1}.
\end{equation}

If instead $\Delta^{k,r+1}=\Delta_{\mathrm{icb}}^{k,r,q^\star}$, then at the immediately preceding improvement sub-iterate (index $u-1$) the stopping test is false; with $\alpha\in(0,1)$,
\begin{equation}
\label{eq:prestop}
\mu\,\big\|g^{k,r,q^\star,u-1}\big\|\ <\ \alpha^{\,u-2}\,\Delta^{k,r+1}\ \le\ \Delta^{k,r+1}.
\end{equation}
Applying \eqref{eq:FL} at $\Delta=\Delta^{k,r+1}$ with $m$ equal to the model at sub-iterate $u-1$ gives
\begin{equation}
\label{eq:case1b}
\|\nabla h(x^{k,r+1})\|\ \le\ \big\|g^{k,r,q^\star,u-1}\big\|+\kappa_{eg}\,\Delta^{k,r+1}
\ \le\ \Big(\frac{1}{\mu}+\kappa_{eg}\Big)\,\Delta^{k,r+1}.
\end{equation}

\textbf{Case 2 (Step~\ref{step:1.2b}).}
Here, the incumbent model $m_{\mathrm{icb}}^{k,r,q^\star}$ is already certifiably fully linear on $B(x^{k,r,q^\star};\Delta_{\mathrm{icb}}^{k,r,q^\star})$ and satisfies the criticality relation
\[
\Delta_{\mathrm{icb}}^{k,r,q^\star}\ \le\ \mu\,\|g_{\mathrm{icb}}^{k,r,q^\star}\|.
\]
Hence, without invoking the clamp rule, the fully linear inequality at the center gives
\[
\|\nabla h(x^{k,r+1})\|\ \le\ \|g_{\mathrm{icb}}^{k,r,q^\star}\| + \kappa_{eg}\,\Delta_{\mathrm{icb}}^{k,r,q^\star}.
\]
Using $\Delta_{\mathrm{icb}}^{k,r,q^\star}\le\mu\|g_{\mathrm{icb}}^{k,r,q^\star}\|$, we obtain
\[
\|\nabla h(x^{k,r+1})\|\ \le\ \Big(\kappa_{eg}+\tfrac{1}{\mu}\Big)\,\Delta_{\mathrm{icb}}^{k,r,q^\star}.
\]
Since $\Delta^{k,r+1}=\Delta_{\mathrm{icb}}^{k,r,q^\star}$ at termination, this reduces to
\begin{equation}
\label{eq:case2}
\|\nabla h(x^{k,r+1})\|\ \le\ \Big(\kappa_{eg}+\tfrac{1}{\mu}\Big)\,\Delta^{k,r+1}.
\end{equation}

\noindent Combining \eqref{eq:case1a}, \eqref{eq:case1b}, and \eqref{eq:case2} establishes \eqref{eq:finalC}.
\end{proof}

\begin{remark}

If $\varepsilon^{k,r}_4$ is chosen too large, termination could occur in Step~\ref{step:1.1}, where the model need not be fully linear, and the analysis would fail. 
By contrast, taking $\varepsilon^{k,r}_4$ sufficiently small ensures that any radius satisfying $\Delta^{k,r,q}\leq\varepsilon^{k,r}_4$ is itself small, which in turn forces the associated model gradient $\|g_{\mathrm{icb}}^{k,r,q}\|$ to also be sufficiently small, guaranteeing that termination occurs in the criticality branch (Step~\ref{step:1.2}). 
\end{remark}

\begin{lemma}[Convergence of inner iterations for the smooth case]\label{lem:inner-exact}
Suppose that Assumptions~\ref{ass:smooth-lip-gradient} and~\ref{ass:hessian-bounded} as well as Assumptions~\ref{ass:fi-and-g-lower-bounded} and~\ref{ass:closed_form_solution} hold, with $\rho^k = 2\beta^k$. If the $x_i$ subproblems in Algorithm  \ref{proposed_alg} are solved inexactly, satisfying the following inner iteration criteria:

\begin{equation}
\begin{aligned}
\epsilon_1^k \geq \epsilon_1^{k,r} &:= \left\| \rho^k A^\top \big( B\overline{x}^{k,r+1} + z^{k,r+1} - B\overline{x}^{k,r} - z^{k,r} \big) \right\|, \\
\epsilon_2^k \geq \epsilon_2^{k,r} &:= \left\| \rho^k B^\top \big( z^{k,r+1} - z^{k,r} \big) \right\|, \\
\epsilon_3^k \geq \epsilon_3^{k,r} &:= \left\| A x^{k,r+1} + B\overline{x}^{k,r+1} + z^{k,r+1} \right\|, \\
\epsilon_4^k \geq \epsilon_4^{k,r} &\geq \Delta^{k,r+1};
\end{aligned}
\label{eq:inner_stop_criteria}
\end{equation}

 \noindent then the inner iterations at a finite $r$ satisfy the following criteria:

\begin{equation}
\begin{aligned}
d_1^k + d_4^k  =&  \nabla f(x^{k,r+1}) + A^\top y^{k,r+1}, \\
d_2^k        =& \nabla g(\bar{x}^{k,r+1}) + B^\top y^{k,r+1}, \\
0            & =     \lambda^k + \beta^k z^{k,r+1} + y^{k,r+1}, \\
d_3^k        & =     A x^{k,r+1} + B \bar{x}^{k,r+1} + z^{k,r+1},
\end{aligned}
\label{eq:inner_optimality}
\end{equation}

\noindent for some \(d_1^k, d_2^k, d_3^k,\) and \(d_4^k\) satisfying 
\(\|d_1^k\| \leq \epsilon_1^k\), \(\|d_2^k\| \leq \epsilon_2^k\), 
\(\|d_3^k\| \leq \epsilon_3^k\), and 
\(\|d_4^k\| \leq C_1(\tau,\mu,\kappa_{eg})\,\epsilon_4^k\), 
respectively, where \(C_1(\tau,\mu,\kappa_{eg}) \geq 0\) is the constant 
defined in Lemma~\ref{lem:grad_norm_tr_bound}.

\end{lemma}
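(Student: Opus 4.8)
The plan is to verify the four relations in \eqref{eq:inner_optimality} one at a time by reading off the (exact or inexact) first-order conditions of each block update and then eliminating the intermediate dual iterate $y^{k,r}$ in favor of $y^{k,r+1}$ through the dual update $y^{k,r+1}=y^{k,r}+\rho^k(Ax^{k,r+1}+B\bar{x}^{k,r+1}+z^{k,r+1})$. Three of the four relations arise from updates carried out exactly (the $\bar{x}$-, $z$-, and $y$-steps), so they contribute no error; only the $x$-block is solved inexactly, and its residual is quantified through Lemma~\ref{lem:grad_norm_tr_bound}. I would organize the argument so that each of $d_1^k,\dots,d_4^k$ is identified with a concrete, already-bounded quantity.

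First I would dispose of the two essentially definitional relations. The fourth relation is just the primal residual $d_3^k:=Ax^{k,r+1}+B\bar{x}^{k,r+1}+z^{k,r+1}$, whose norm equals $\epsilon_3^{k,r}\le\epsilon_3^k$ by the third stopping test in \eqref{eq:inner_stop_criteria}. The third relation, $\lambda^k+\beta^k z^{k,r+1}+y^{k,r+1}=0$, is obtained exactly: since the $z$-update minimizes the augmented Lagrangian \eqref{eq:two_level_lagrangian} in closed form, its stationarity reads $y^{k,r}+\rho^k(Ax^{k,r+1}+B\bar{x}^{k,r+1}+z^{k,r+1})+\lambda^k+\beta^k z^{k,r+1}=0$, and the bracketed group is precisely $y^{k,r+1}$, giving the identity with no residual.

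Next I would treat the $\bar{x}$- and $x$-relations, which carry the errors $d_2^k$ and $d_1^k+d_4^k$. For $\bar{x}$, Assumption~\ref{ass:closed_form_solution} gives the exact stationarity $\nabla g(\bar{x}^{k,r+1})+\rho^k B^\top(Ax^{k,r+1}+B\bar{x}^{k,r+1}+z^{k,r}+y^{k,r}/\rho^k)=0$; substituting $y^{k,r+1}$ and cancelling $B^\top y^{k,r}$ collapses everything to $\nabla g(\bar{x}^{k,r+1})+B^\top y^{k,r+1}=\rho^k B^\top(z^{k,r+1}-z^{k,r})=:d_2^k$, whose norm is exactly $\epsilon_2^{k,r}\le\epsilon_2^k$. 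For the $x$-block, Lemma~\ref{lem:grad_norm_tr_bound} guarantees that the inexact minimizer returned by the trust-region solver satisfies $\nabla f(x^{k,r+1})+\rho^k A^\top(Ax^{k,r+1}+B\bar{x}^{k,r}+z^{k,r}+y^{k,r}/\rho^k)=d_4^k$ with $\|d_4^k\|\le C_1(\tau,\mu,\kappa_{eg})\Delta^{k,r+1}\le C_1(\tau,\mu,\kappa_{eg})\epsilon_4^k$, using the fourth stopping test $\Delta^{k,r+1}\le\epsilon_4^{k,r}\le\epsilon_4^k$. The same substitution of $y^{k,r+1}$ and cancellation of $A^\top y^{k,r}$ then yields $\nabla f(x^{k,r+1})+A^\top y^{k,r+1}=d_4^k+\rho^k A^\top(B\bar{x}^{k,r+1}+z^{k,r+1}-B\bar{x}^{k,r}-z^{k,r})=d_4^k+d_1^k$, where the $Ax^{k,r+1}$ terms cancel because both the subproblem center and the dual update use the same $(r{+}1)$ iterate; the first stopping test bounds $\|d_1^k\|=\epsilon_1^{k,r}\le\epsilon_1^k$. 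Collecting the four relations and their bounds gives \eqref{eq:inner_optimality}.

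The step I expect to be delicate is the $x$-relation, for two reasons. First, I must ensure that the quantity produced by the distributed block-Jacobi solves in the for-loop coincides with the gradient residual of the aggregated $x$-subproblem with the penalty center held at the current inner iterate; the cross-block terms $\rho^k A_i^\top\sum_{j\ne i}A_j(x_j^{k,r+1}-x_j^{k,r})$ must be accounted for, either by treating the $x$-update as one aggregated subproblem or by absorbing these successive-difference terms into the residual, and this is precisely where the clean form of $\epsilon_1^{k,r}$ has to be justified. Second, the phrase ``at a finite $r$'' requires that the tests in \eqref{eq:inner_stop_criteria} are actually triggered after finitely many inner iterations; this is not algebraic but relies on convergence of the inexact inner ADMM to a stationary point of the relaxed problem \eqref{eq:relaxed_problem} under $\rho^k=2\beta^k$ (in the spirit of \cite{sun2023,tang2022}), which forces the successive differences $\bar{x}^{k,r+1}-\bar{x}^{k,r}$, $z^{k,r+1}-z^{k,r}$ and the primal residual to vanish, while each trust-region radius is driven below $\epsilon_4^{k,r}$ by Lemma~\ref{lem:dfo_tr convergence}. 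Establishing this finiteness while keeping the inexact $x$-solves compatible with inner convergence is, I expect, the main obstacle.
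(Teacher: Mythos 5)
Your algebraic treatment of the four relations in \eqref{eq:inner_optimality} coincides exactly with the paper's: the paper likewise obtains $\lambda^k+\beta^k z^{k,r+1}+y^{k,r+1}=0$ from the closed-form $z$-update, eliminates $y^{k,r}$ through the dual update to identify $d_1^k=\rho^k A^\top(B\overline{x}^{k,r+1}+z^{k,r+1}-B\overline{x}^{k,r}-z^{k,r})$ and $d_2^k=\rho^k B^\top(z^{k,r+1}-z^{k,r})$, and bounds $d_4^k$ by invoking Lemma~\ref{lem:grad_norm_tr_bound} together with $\Delta^{k,r+1}\le\epsilon_4^{k,r}\le\epsilon_4^k$. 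That half of your proposal is correct and is the same route the paper takes.

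The genuine gap is the claim ``at a finite $r$.'' Your proposal explicitly defers this as ``the main obstacle,'' appealing to convergence of the inner ADMM ``in the spirit of'' prior work, but the paper's proof consists mostly of establishing precisely this point, and without it the lemma is not proved. Concretely, the paper shows: (i) the descent inequality \eqref{eq:lagrangian_decrease},
\begin{equation*}
L^{k,r+1}\ \le\ L^{k,r}\ -\ \beta^k\,\|B\overline{x}^{k,r+1}-B\overline{x}^{k,r}\|^2\ -\ \tfrac{\beta^k}{2}\,\|z^{k,r+1}-z^{k,r}\|^2 ,
\end{equation*}
where the crucial observation making this valid under \emph{inexact} $x_i$-solves is that the trust-region acceptance test only accepts steps with nonnegative actual reduction, so each approximate block update cannot increase $L$ (this is what replaces exact minimization in \eqref{eq:sequential_decrease}); (ii) a lower bound on $L$ along the inner iterates, proved via the strong convexity of $v(z;\lambda^k,\beta^k)=\lambda^{k\top}z+\tfrac{\beta^k}{2}\|z\|^2$, the identity $\lambda^k+\beta^k z^{k,r+1}=-y^{k,r+1}$, the compactness of $[\underline{\lambda},\overline{\lambda}]$, and Assumption~\ref{ass:fi-and-g-lower-bounded}; and (iii) the conclusion that the successive differences, and hence the residuals $\epsilon_1^{k,r},\epsilon_2^{k,r},\epsilon_3^{k,r}$, vanish as $r\to\infty$, while for each fixed $r$ the fourth test is achievable in finitely many trust-region iterations because Lemma~\ref{lem:dfo_tr convergence} drives $\Delta^{k,r,q}\to 0$. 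Steps (i)--(iii) are what convert your purely algebraic verification into a statement about a finite $r$, and they are absent from your proposal; to complete it you would need to supply this descent-plus-lower-bound argument (or an equivalent).

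One point in your favor: the cross-block coupling issue you flag in the $x$-relation is real. Lemma~\ref{lem:grad_norm_tr_bound} controls the gradient of each block objective $I_i$ with the other blocks frozen at their iteration-$r$ values, whereas \eqref{eq:inner_optimality} requires the gradient of the aggregated subproblem; the discrepancy in block $i$ is $\rho^k A_i^\top\sum_{j\ne i}A_j(x_j^{k,r+1}-x_j^{k,r})$, which vanishes only when the column blocks of $A$ are orthogonal (as in consensus structures). The paper's own proof passes over this silently by writing a single aggregated optimality condition with error $d_4^k$, so here your proposal is more careful than the paper --- though neither resolves it.
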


\begin{proof}

\noindent Following the analysis in \cite{tang2022}, we first prove that:
\begin{equation}
\begin{split}
L(x^{k+1}, \overline{x}^{k,r+1}, z^{k,r+1}, y^{k,r+1}) &\leq L(x^{k,r}, \overline{x}^{k,r}, z^{k,r}, y^{k,r}) \\
&\quad - \beta^k \left\| B\overline{x}^{k,r+1} - B\overline{x}^{k,r} \right\|^2 - \frac{\beta^k}{2} \left\| z^{k,r+1} - z^{k,r} \right\|^2 
\end{split}
\label{eq:lagrangian_decrease}
\end{equation}

\noindent for $r=0,1,2,\ldots$. Note that for the $x_i$ subproblem in Algorithm~\ref{proposed_alg}, the following holds as the update of $x_i$ does not lead to an increase in $L$:

\begin{equation}
\begin{aligned}
L(x_1^{k,r}, x_2^{k,r}, \ldots, x_N^{k,r}, \overline{x}^{k,r}, z^{k,r}, y^{k,r}) &\geq L(x_1^{k,r+1}, x_2^{k,r}, \ldots, x_N^{k,r}, \overline{x}^{k,r}, z^{k,r}, y^{k,r}) \\
&\geq L(x_1^{k,r+1}, x_2^{k,r+1}, \ldots, x_N^{k,r}, \overline{x}^{k,r}, z^{k,r}, y^{k,r}) \\
&\geq L(x_1^{k,r+1}, x_2^{k,r+1}, \ldots, x_N^{k,r+1}, \overline{x}^{k,r}, z^{k,r}, y^{k,r}). 
\end{aligned}
\label{eq:sequential_decrease}
\end{equation}

\noindent This monotonicity holds even though the $x_i$-subproblems are solved approximately because 
the trust-region acceptance test ensures that each accepted step produces nonnegative actual reduction in $L$, 
while rejected steps leave the iterate unchanged. 
Hence every block update guarantees $L$ does not increase.

For simplicity, we concatenate the variables $x_i$ ($i=1,\ldots,N$) into a single vector $x$, and obtain from \eqref{eq:sequential_decrease}:

\begin{equation}\label{eq:concatenated_decrease}
L(x^{k,r}, \overline{x}^{k,r}, z^{k,r}, y^{k,r}) \geq L(x^{k,r+1}, \overline{x}^{k,r}, z^{k,r}, y^{k,r}).
\end{equation}

\noindent Next, we consider the decrease in the $\overline{x}$ update. Using the optimality condition of the $\overline{x}$ subproblem and the convexity of \(g\), \cite{tang2022} showed that:
\begin{equation}
\begin{aligned}
L(x^{k,r+1}, \overline{x}^{k,r+1}, z^{k,r}, y^{k,r}) &\leq L(x^{k,r+1}, \overline{x}^{k,r}, z^{k,r}, y^{k,r}) \\
&\quad - \frac{\rho^k}{2} \left\| B\overline{x}^{k,r+1} - B\overline{x}^{k,r} \right\|^2.
\end{aligned}
\label{eq:xbar_update_decrease}
\end{equation}

\noindent Now consider the $z$- and $y$-updates. \cite{tang2022} obtained the following bound:
\begin{equation}
\begin{aligned}
L(x^{k,r+1}, \overline{x}^{k,r+1}, z^{k,r+1}, y^{k,r+1}) &- L(x^{k,r+1}, \overline{x}^{k,r+1}, z^{k,r}, y^{k,r}) \\
&\leq -\left(\frac{\rho^k}{2} - \frac{(\beta^k)^2}{\rho^k}\right) \|z^{k,r+1} - z^{k,r}\|^2.
\end{aligned}
\label{eq:z_y_update_bound_0}
\end{equation}

\noindent Setting \(\rho^k = 2\beta^k\), we have:
\begin{equation}
\begin{aligned}
L(x^{k,r+1}, \overline{x}^{k,r+1}, z^{k,r+1}, y^{k,r+1}) &- L(x^{k,r+1}, \overline{x}^{k,r+1}, z^{k,r}, y^{k,r}) \\
&= -\frac{\beta^k}{2} \|z^{k,r+1} - z^{k,r}\|^2.
\end{aligned}
\label{eq:z_y_update_bound}
\end{equation}

\noindent By combining inequalities \eqref{eq:concatenated_decrease}, \eqref{eq:xbar_update_decrease}, and \eqref{eq:z_y_update_bound}, we arrive at the result in \eqref{eq:lagrangian_decrease}.

We proceed by establishing that the augmented Lagrangian is bounded below, thereby guaranteeing its convergence to some scalar \(L^k \in \mathbb{R}\). We first define \(v(z; \lambda, \beta) = \lambda^\top z + \frac{\beta}{2} \|z\|^2\). Since \(v\) is strongly convex with modulus \(\beta\) and \(\rho^k >\beta^k\), it follows that for any \(z' \in \mathbb{R}^n\):
\begin{equation}
v(z^{k,r}; \lambda^k, \beta^k) + (\lambda^k + \beta^k z^{k,r})^\top (z' - z^{k,r}) + \frac{\rho^k}{2} \|z' - z^{k,r}\|^2 \geq v(z'; \lambda^k, \beta^k). \label{eq:v_convexity}
\end{equation}

\noindent From the \(z\)-update in Algorithm~\ref{proposed_alg}, we have:
\begin{equation}
\lambda^k + \beta^k z^{k,r+1} = -y^{k,r+1}. \label{eq:from_z_update}
\end{equation}

\noindent Combining \eqref{eq:v_convexity} and \eqref{eq:from_z_update}, we obtain:
\begin{equation}
v(z^{k,r}; \lambda^k, \beta^k) + y^{k,r\,\top} (z^{k,r} - z') \geq v(z'; \lambda^k, \beta^k) - \frac{\rho^k}{2} \|z' - z^{k,r}\|^2. \label{eq:v_inequality}
\end{equation}

\noindent Letting \(z' = -(A x^{k,r} + B \overline{x}^{k,r})\), we have:
\begin{equation}
\begin{aligned}
v(z^{k,r}; \lambda^k, \beta^k) 
&+ y^{k,r\,\top}\!\left(A x^{k,r} + B \overline{x}^{k,r} + z^{k,r}\right) \\
&\geq\;
v\!\left(-\!\left(A x^{k,r} + B \overline{x}^{k,r}\right); \lambda^k, \beta^k\right)
 - \tfrac{\rho^k}{2}\,\|A x^{k,r} + B \overline{x}^{k,r} + z^{k,r}\|^2.
\end{aligned}
\label{eq:lagrangian_lower_bound}
\end{equation}

\noindent Consequently, the augmented Lagrangian satisfies:
\begin{equation}
\begin{aligned}
L(x^{k,r}, \overline{x}^{k,r}, z^{k,r}, y^{k,r}) &= f(x^{k,r}) + g(\overline{x}^{k,r}) + v(z^{k,r}; \lambda^k, \beta^k) \\
&\quad + y^{k,r\,\top} (A x^{k,r} + B \overline{x}^{k,r} + z^{k,r}) + \frac{\rho^k}{2} \left\| A x^{k,r} + B \overline{x}^{k,r} + z^{k,r} \right\|^2 \\
&\geq f(x^{k,r}) + g(\overline{x}^{k,r}) + v(-(A x^{k,r} + B \overline{x}^{k,r}); \lambda^k, \beta^k).
\end{aligned}
\label{eq:auglag_expression}
\end{equation}

\noindent The inequality \(v(z) = \lambda^\top z + \frac{\beta}{2} \|z\|^2 \geq -\frac{\|\lambda\|^2}{2\beta}\), together with bounded \(\lambda \in [\underline{\lambda}, \overline{\lambda}]\), \(\beta^k \geq \beta^1 > 0\), and lower-bounded \(f\) and \(g\), implies that the augmented Lagrangian \(L\) has a lower bound.

Taking the limit as \(r \to \infty\) in \eqref{eq:lagrangian_decrease}, 
we find \(B\overline{x}^{k,r+1} - B\overline{x}^{k,r} \to 0\), 
\(z^{k,r+1} - z^{k,r} \to 0\), and 
\(A x^{k,r} + B \overline{x}^{k,r} + z^{k,r} \to 0\). 
Moreover, by Lemma~\ref{dfo_smooth_alg}, for each \(r\) the trust-region radius 
can be reduced arbitrarily, so there exists a finite \(q^\star\) such that 
\(\Delta^{k,r+1}:=\Delta^{k,r,q^\star} \leq \epsilon_4^{k,r}\). 
Thus, there exists some finite \(r\) satisfying \eqref{eq:inner_stop_criteria}, 
at which point the optimality condition for \(x^{k,r+1}\) is:
\begin{equation}
d_4^k = \nabla f(x^{k,r+1}) + A^\top y^{k,r} 
+ \rho^k A^\top \big(A x^{k,r+1} + B \overline{x}^{k,r} + z^{k,r}\big).
\label{eq:optimality_condition}
\end{equation}

\noindent Here, \(d_4^k\) represents the gradient error from inexactly solving the \(x_i\) subproblems using trust-region solvers in Algorithm~\ref{proposed_alg} .

Using the \(y^{k,r}\) update, \eqref{eq:optimality_condition} can be rewritten as:
\begin{equation}
d_4^k =\nabla f(x^{k,r+1}) + A^\top y^{k,r+1} - \rho^k A^\top (B \overline{x}^{k,r+1} + z^{k,r+1} - B \overline{x}^{k,r} - z^{k,r}), \label{eq:rewritten_optimality}
\end{equation}
which implies:
\begin{equation}
d_4^k + \underbrace{\rho^k A^\top (B \overline{x}^{k,r+1} + z^{k,r+1} - B \overline{x}^{k,r} - z^{k,r})}_{d_1^k} =\nabla f(x^{k,r+1}) + A^\top y^{k,r+1}. \label{eq:subgradient_inclusion}
\end{equation}

\noindent From the first inequality in \eqref{eq:inner_stop_criteria}, the norm of \(d_1^k\) in \eqref{eq:subgradient_inclusion} is bounded by \(\epsilon_1^k\). For \(d_4^k\), Lemma~\ref{lem:grad_norm_tr_bound} ensures \(\|d_4^k\| \leq C_1(\tau,\mu,\kappa_{eg})\,\Delta^{k,r+1}\) with \(C_1(\tau,\mu,\kappa_{eg})>0\). Since \(\Delta^{k,r+1}\leq \epsilon_4^{k,r}\leq \epsilon_4^k\), it follows that \(\|d_4^k\| \leq C_1(\tau,\mu,\kappa_{eg})\,\epsilon_4^k\), thereby establishing the first condition in \eqref{eq:inner_optimality}.

To derive the second condition in \eqref{eq:inner_optimality}, consider the optimality condition for the \(\overline{x}\) update:
\begin{equation}
0 = \nabla g(\overline{x}^{k,r+1}) + \rho^k B^\top \left( A x^{k,r+1} + B \overline{x}^{k,r+1} + z^{k,r} + \frac{y^{k,r}}{\rho^k} \right). \label{eq:optimality_condition_for_xbar}
\end{equation}

\noindent Combining \eqref{eq:optimality_condition_for_xbar} with the update rule for \(y^{k,r}\), we have:
\begin{equation}
\underbrace{\rho^k B^\top (z^{k,r+1} - z^{k,r})}_{d_2^k} = \nabla g(\overline{x}^{k,r+1}) + B^\top y^{k,r+1}. \label{eq:subgradient_inclusion_2}
\end{equation}

\noindent From the second inequality in \eqref{eq:inner_stop_criteria}, the norm of \(d_2^k\) is bounded by \(\epsilon_2^k\), which establishes the second condition in \eqref{eq:inner_optimality}. The third condition arises from \eqref{eq:from_z_update}, and the fourth is obvious.

\end{proof}

\begin{remark}
The use of Lemma~\ref{lem:grad_norm_tr_bound} in the above proof relies on 
$\varepsilon^{k,r}_4$ being sufficiently small, since only then does termination occur in 
Step~\ref{step:1.2} with a certifiably fully linear model. 
At early outer iterations, when $\varepsilon^{k,r}_4$ may still be large, the bound 
$\|d_4^k\|\le C_1(\tau,\mu,\kappa_{eg})\,\Delta^{k,r+1}$ cannot be theoretically guaranteed. 
Nevertheless, the inner termination condition $\Delta^{k,r+1}\leq \varepsilon^{k,r}_4$ 
provides a practical stopping criterion and still ensures progress. 
As the algorithm proceeds and $\varepsilon^{k,r}_4$ decreases, the assumptions of 
Lemma~\ref{lem:grad_norm_tr_bound} are eventually satisfied, and the gradient--radius bound holds. 
This does not compromise the overall convergence, since the convergence guarantees are driven 
by later iterations where the tolerances are sufficiently small.
\end{remark}

For the convergence of the outer iterations, it is desirable to assume an upper bound on the augmented Lagrangian as stated below.

\begin{assumption}\label{ass:aug-lag-bounded}
There exists a constant upper bound $\bar{L}$ such that the value of the augmented Lagrangian at the initial points of each inner iteration, $L(x^{k,0},\, \bar{x}^{k,0},\, z^{k,0},\, y^{k,0},\, \lambda^k,\, \rho^k,\, \beta^k)$, does not exceed $\bar{L}$ for any outer iteration $k$. 
\end{assumption}

This assumption serves as a type of “warm start,” ensuring that the augmented Lagrangian remains finite at initialization. This can be achieved by initializing with a feasible point, which keeps the starting value of the augmented Lagrangian finite and controlled at each iteration.

\begin{lemma}[Convergence of outer iterations for the smooth case]\label{lem:outer-exact} 
 Suppose that Assumptions~\ref{ass:fi-and-g-lower-bounded}, ~\ref{ass:closed_form_solution}, and~\ref{ass:aug-lag-bounded} hold. Then for any \(\epsilon_1\), \(\epsilon_2\),\(\epsilon_3\), and \(\epsilon_4 > 0\), within a finite number of outer iterations \(k\), Algorithm 1 finds an approximate stationary point \((x^{k+1}, \overline{x}^{k+1}, z^{k+1}, y^{k+1})\) of \eqref{eq:standard_admm_form}, satisfying
\begin{equation}
\begin{aligned}
d_1+d_4 &= \nabla f(x^{k+1}) +  A^T y^{k+1}, \\
d_2 &= \nabla g(\overline{x}^{k+1})  + B^T y^{k+1}, \\
d_3 &= A x^{k+1} + B \overline{x}^{k+1}
\end{aligned}
\label{eq:outer_optimality}
\end{equation}

\noindent for some \(d_1\), \(d_2\), \(d_3\), \(d_4\), satisfying \(\|d_j\| \leq \epsilon_j\), \(j = 1, 2, 3\) and \(\|d_4\| \leq C_1\epsilon_4\) for some $C_1>0$.

\end{lemma}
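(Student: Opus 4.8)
The plan is to reduce the outer convergence to two ingredients: the per-outer-iteration guarantee already supplied by Lemma~\ref{lem:inner-exact}, and a proof that the slack variable is driven to zero by the outer method-of-multipliers loop. First I would invoke Lemma~\ref{lem:inner-exact}: under Assumptions~\ref{ass:fi-and-g-lower-bounded} and~\ref{ass:closed_form_solution}, for every outer index $k$ the inner ADMM loop terminates after finitely many inner iterations $r$ at a point $(x^{k+1},\bar{x}^{k+1},z^{k+1},y^{k+1})$ satisfying the inner optimality system \eqref{eq:inner_optimality}, with $\|d_1^k\|\le\epsilon_1^k$, $\|d_2^k\|\le\epsilon_2^k$, $\|d_3^k\|\le\epsilon_3^k$, and $\|d_4^k\|\le C_1\epsilon_4^k$. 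The first two identities of \eqref{eq:inner_optimality} already coincide with the first two lines of the target system \eqref{eq:outer_optimality}, so I would set $d_1:=d_1^k$, $d_2:=d_2^k$, $d_4:=d_4^k$. The only discrepancy is the third line: the inner residual carries an extra slack term $z^{k+1}$, so I would define $d_3:=d_3^k-z^{k+1}=A x^{k+1}+B\bar{x}^{k+1}$ and bound it by $\|d_3\|\le\|d_3^k\|+\|z^{k+1}\|\le\epsilon_3^k+\|z^{k+1}\|$.

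The central and hardest step is then to establish that the terminal slack satisfies $\|z^{k+1}\|\to 0$ as $k\to\infty$. I would split on the behavior of the penalty sequence $\{\beta^k\}$ produced by the outer update. If $\beta^k$ is amplified only finitely often, then it is eventually constant, and the amplification rule forces $\|z^{k}\|\le\omega\|z^{k-1}\|$ for all large $k$ with $\omega\in(0,1)$, yielding geometric decay $\|z^{k}\|\to 0$. If instead $\beta^k\to\infty$, I would invoke Assumption~\ref{ass:aug-lag-bounded}: the monotone decrease of the augmented Lagrangian across the inner loop (inequality \eqref{eq:lagrangian_decrease}) together with the warm-start bound $\bar{L}$ keeps the terminal value $L^k\le\bar{L}$, while the lower bound $v(z;\lambda,\beta)\ge-\|\lambda\|^2/(2\beta)$, with $\lambda^k$ confined to the compact box $[\underline{\lambda},\overline{\lambda}]$ and the residual $d_3^k$ vanishing, controls the quantity $\beta^k\|z^{k+1}\|^2$; since $\beta^k\to\infty$ while the controlling quantities stay bounded, $\|z^{k+1}\|\to 0$. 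This divergent-penalty case, which hinges on carefully combining the boundedness of $L$, the projected (hence bounded) duals, and the smallness of the residual $d_3^k$, is where I expect the main technical difficulty to lie, and it mirrors the slack-annihilation argument of the two-level scheme in \cite{sun2023,tang2022}.

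Finally I would combine the two steps. Since the externally supplied tolerances satisfy $\{\epsilon_{1,2,3,4}^k\}\downarrow 0$ and $\|z^{k+1}\|\to 0$, each of the bounds $\|d_1\|\le\epsilon_1^k$, $\|d_2\|\le\epsilon_2^k$, $\|d_4\|\le C_1\epsilon_4^k$, and $\|d_3\|\le\epsilon_3^k+\|z^{k+1}\|$ eventually drops below the prescribed thresholds $\epsilon_1,\epsilon_2,C_1\epsilon_4,\epsilon_3$. Consequently there exists a finite outer index at which the stationarity criterion \eqref{eq:outer_optimality} holds and simultaneously $\epsilon_4^k<\epsilon_4$, so the outer while-loop terminates and returns an approximate stationary point of \eqref{eq:standard_admm_form}, establishing the claim.
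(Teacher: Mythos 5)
Your proof skeleton matches the paper's: invoke Lemma~\ref{lem:inner-exact} at each outer iteration, then split on whether the penalty sequence $\{\beta^k\}$ is bounded, handling the bounded case via the geometric decay $\|z^{k}\|\le\omega\|z^{k-1}\|$ forced by the outer update rule, and the unbounded case via Assumption~\ref{ass:aug-lag-bounded} together with the lower-boundedness of $f$ and $g$. The bounded-penalty half and the final combination step coincide with the paper's argument essentially verbatim.

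The difference---and the one genuine soft spot---is the unbounded case. You propose to show $\|z^{k+1}\|\to 0$ by lower-bounding the terminal augmented Lagrangian, citing the inequality $v(z;\lambda,\beta)\ge -\|\lambda\|^2/(2\beta)$ as what "controls $\beta^k\|z^{k+1}\|^2$". That inequality cannot do this job: it is obtained precisely by discarding the quadratic term $\tfrac{\beta}{2}\|z\|^2$, i.e., it throws away the very quantity you need to retain. In addition, the terminal Lagrangian contains the cross term $(y^{k+1})^\top d_3^k$ with $y^{k+1}=-(\lambda^k+\beta^k z^{k+1})$, which is not bounded a priori; it contributes a term of order $-\beta^k\|z^{k+1}\|\,\epsilon_3^k$ that must be absorbed into the quadratic (e.g., by completing the square) before you can conclude $\|z^{k+1}\|\to 0$. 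This is fixable, but your write-up does not address it, and as stated the argument does not close. The paper sidesteps both issues: it reuses \eqref{eq:auglag_expression}, already established in the proof of Lemma~\ref{lem:inner-exact} via the strong-convexity inequality \eqref{eq:v_inequality} with $z'=-(Ax^{k,r}+B\bar{x}^{k,r})$, which eliminates $z$ and $y$ altogether and yields $\bar{L}\ \ge\ f(x^{k+1})+g(\bar{x}^{k+1})-(\lambda^k)^\top w^{k+1}+\tfrac{\beta^k}{2}\|w^{k+1}\|^2$ with $w^{k+1}:=Ax^{k+1}+B\bar{x}^{k+1}$; boundedness of the projected duals $\lambda^k$ and $\beta^k\to\infty$ then force $w^{k+1}\to 0$ directly, with no need to establish $z^{k+1}\to 0$ in this case (the two statements are equivalent anyway, since $\|w^{k+1}+z^{k+1}\|\le\epsilon_3^k\to 0$). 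If you either complete the square against the cross term or simply switch to the paper's inequality, your variant goes through.
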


\begin{proof}
Following the argument in~\cite{tang2022}, consider the case where \( \beta^k \) is unbounded. From \eqref{eq:auglag_expression}, we obtain
\begin{equation}
\overline{L} \geq f(x^{k+1}) + g(\overline{x}^{k+1}) - \lambda^k (A x^{k+1} + B x^{k+1}) + \frac{\beta^k}{2} \|A x^{k+1} + B x^{k+1}\|^2. \label{eq:auglag_bound}
\end{equation}

\vspace{-0.2cm}

\noindent Since \( f \) and \( g \) are lower bounded, the right-hand side remains bounded only if \( A x^{k+1} + B x^{k+1} \to 0 \) as \( \beta^k \to \infty \). In combination with the first two conditions in \eqref{eq:inner_optimality}, satisfied in the limit as \( \epsilon_1^k, \epsilon_2^k, \epsilon_3^k, \epsilon_4^k  \to 0 \), this implies \eqref{eq:outer_optimality}.

Now consider the case where \( \beta^k \) is bounded, meaning the update \( \beta^{k+1} = \gamma \beta^k \) occurs only finitely many times. According to the outer iteration rule in Algorithm~1, we then have \( \|z^{k+1}\| \leq \omega \|z^k\| \) for all but finitely many \( k \), implying \( z^{k+1} \to 0 \). As a result, taking the limit in \eqref{eq:inner_optimality} leads to \eqref{eq:outer_optimality}.
\end{proof}

\vspace{-0.1cm}

We note that for the nonsmooth case, a direct  per-iterate bound between the trust-region radius and a Clarke-stationarity measure is not available in closed form. Unlike the smooth case where fully linear models yield a gradient–radius bound (Lemma~\ref{lem:grad_norm_tr_bound}), our nonsmooth TR models are not known to satisfy a Clarke “fully linear’’ property. To enable analysis, we therefore adopt the following ad hoc assumption, paralleling Lemma~\ref{lem:grad_norm_tr_bound}.

\begin{assumption}[Clarke Subgradient Radius Bound]
\label{assump:subgrad-radius}
For each inexact nonsmooth trust-region subproblem solved with Algorithm~\ref{dfo_nonsmooth_alg}, 
there exists a constant $C_2 > 0$ such that whenever the solver terminates at radius $\Delta^{k,r+1}$, we obtain
\[
\bar{d}_4^k \coloneqq \operatorname{dist}\!\left(0,\,\partial_C h(x^{k,r+1})\right)
\;\le\; C_2\,\Delta^{k,r+1}.
\]
\end{assumption}

Assumption~\ref{assump:subgrad-radius} states that the trust-region radius bounds the Clarke subgradient norm at termination, analogous to the smooth gradient–radius relation. While a proof of this bound is not yet available for the nonsmooth trust-region method, it is consistent with its asymptotic properties $\lim_{q\to\infty}\Delta_q=0$ and $\liminf_{q\to\infty}\operatorname{dist}(0,\partial_C h(x_q))=0$. This assumption allows us to extend the ADMM–DFO convergence results to nonsmooth problems.

\begin{lemma}[Convergence of inner iterations for the nonsmooth case]\label{lem:inner-exact_nonsmooth}

Suppose Assumptions~\ref{ass:local-lipschitz} and \ref{ass:surrogate-growth} 
(for the nonsmooth trust-region solver) as well as 
Assumptions~\textup{\ref{ass:fi-and-g-lower-bounded}--\ref{assump:subgrad-radius}} 
hold, with $\rho^k = 2\beta^k$. If the $x_i$ subproblems in 
Algorithm~\ref{proposed_alg} are solved inexactly using 
Algorithm~\ref{dfo_nonsmooth_alg} and are terminated at an iterate $q^\star$ 
that satisfies \eqref{eq:inner_stop_criteria}, then the inner iterations 
satisfy, for some finite $r$, the following conditions:

\begin{equation}
\begin{aligned}
\bar d_1^k + \bar d_4^k  &\in \partial_C f(x^{k,r+1}) + A^\top y^{k,r+1}, \\
\bar d_2^k               &\in \partial g(\bar{x}^{k,r+1}) + B^\top y^{k,r+1}, \\
0                        &= \lambda^k + \beta^k z^{k,r+1} + y^{k,r+1}, \\
\bar d_3^k               &= A x^{k,r+1} + B \bar{x}^{k,r+1} + z^{k,r+1},
\end{aligned}
\label{eq:inner_optimality_nonsmooth}
\end{equation}

\noindent
for some $\bar{d}_1^k$, $\bar{d}_2^k$, $\bar{d}_3^k$, and $\bar{d}_4^k$ satisfying
$\|\bar{d}_j^k\| \le \epsilon_j^k$ $(j=1,2,3)$ and
$\|\bar{d}_4^k\| \le C_2\,\epsilon_4^k$ with $C_2 > 0$ from
Assumption~\ref{assump:subgrad-radius}.

\end{lemma}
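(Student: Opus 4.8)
The plan is to mirror the smooth proof of Lemma~\ref{lem:inner-exact} step for step, replacing the classical gradient by the Clarke subdifferential and substituting the assumed radius bound (Assumption~\ref{assump:subgrad-radius}) for the proven one (Lemma~\ref{lem:grad_norm_tr_bound}). First I would observe that the monotone-descent inequality \eqref{eq:lagrangian_decrease} and the lower-boundedness of the augmented Lagrangian transfer almost verbatim, since neither relies on differentiability of $f$. The non-increase of $L$ across the $x_i$-updates follows from the acceptance test in Step~5 of Algorithm~\ref{dfo_nonsmooth_alg}: an accepted step requires $\rho^q\ge\bar\eta_1>0$, hence $h(x^q)-h(x^q+s^q)>0$, while rejected steps leave the iterate fixed, so each block update decreases $h$ (and therefore $L$) or leaves it unchanged. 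The $\bar{x}$-update decrease \eqref{eq:xbar_update_decrease} uses only convexity of $g$, still in force via Assumption~\ref{ass:closed_form_solution}, and the $z$–$y$ bound \eqref{eq:z_y_update_bound} is purely algebraic. Lower-boundedness again follows from strong convexity of $v(z;\lambda,\beta)$ together with the lower bounds on $f$ and $g$ (Assumption~\ref{ass:fi-and-g-lower-bounded}).

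Next I would take $r\to\infty$ in \eqref{eq:lagrangian_decrease} to obtain $B\bar{x}^{k,r+1}-B\bar{x}^{k,r}\to0$, $z^{k,r+1}-z^{k,r}\to0$, and the feasibility residual $Ax^{k,r}+B\bar{x}^{k,r}+z^{k,r}\to0$, so that the first three criteria of \eqref{eq:inner_stop_criteria} are eventually met. For the fourth, I invoke part~1 of Lemma~\ref{lem:nonsmooth_convergence}, which guarantees $\Delta^{q}\to0$ for the nonsmooth solver; hence for every $r$ there is a finite $q^\star$ with $\Delta^{k,r+1}\le\epsilon_4^{k,r}$. Combining these yields a finite $r$ satisfying \eqref{eq:inner_stop_criteria}.

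The optimality inclusions \eqref{eq:inner_optimality_nonsmooth} are then assembled as in the smooth case but in the language of Clarke calculus. Since the quadratic penalty in \eqref{eq:x_subproblem_obj} is smooth, the Clarke sum rule holds with equality, giving $\partial_C I_i(x_i)=\partial_C f_i(x_i)+\rho^k A_i^\top(A_i x_i+\cdots)$. Applying Assumption~\ref{assump:subgrad-radius} to each $x_i$-subproblem at its inexact minimizer produces an element $\bar d_4^k\in\partial_C I_i(x_i^{k,r+1})$ with $\|\bar d_4^k\|\le C_2\Delta^{k,r+1}\le C_2\epsilon_4^k$; concatenating over $i$ and using the separability $\partial_C f(x)=\prod_i\partial_C f_i(x_i)$ reproduces \eqref{eq:optimality_condition} with $\nabla f$ replaced by a selection from $\partial_C f$. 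Rewriting via the $y$-update identifies $\bar d_1^k=\rho^k A^\top(B\bar{x}^{k,r+1}+z^{k,r+1}-B\bar{x}^{k,r}-z^{k,r})$, bounded by $\epsilon_1^k$, and yields the first inclusion. The second inclusion comes from the convex optimality condition $0\in\partial g(\bar{x}^{k,r+1})+\rho^k B^\top(\cdots)$ combined with the $y$-update, giving $\bar d_2^k=\rho^k B^\top(z^{k,r+1}-z^{k,r})$ bounded by $\epsilon_2^k$; the third is the $z$-update identity \eqref{eq:from_z_update}, and the fourth is the definition of the residual.

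The main obstacle is conceptual rather than computational: unlike the smooth case, the radius-to-stationarity bound is postulated rather than derived, so the proof hinges on legitimately converting \enquote{$\mathrm{dist}(0,\partial_C I_i)\le C_2\Delta$} into a usable subgradient inclusion. The delicate point is ensuring the Clarke sum rule applies with equality, so that the single element realizing the distance splits into a genuine element of $\partial_C f_i$ plus the exact smooth gradient of the penalty term; this is what licenses the decomposition $\bar d_1^k+\bar d_4^k\in\partial_C f(x^{k,r+1})+A^\top y^{k,r+1}$. A secondary technical point is reconciling the inner indices ($r$ versus $r+1$) in the coupling term when concatenating the per-block conditions, which is absorbed into $\bar d_1^k$ exactly as in the smooth proof once the vanishing of the successive-iterate differences is used.
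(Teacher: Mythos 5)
Your proposal is correct and follows exactly the route the paper itself takes: the paper omits this proof, stating only that it repeats the argument of Lemma~\ref{lem:inner-exact} with gradients replaced by Clarke subgradients and Lemma~\ref{lem:grad_norm_tr_bound} replaced by Assumption~\ref{assump:subgrad-radius}, which is precisely what you carry out. In fact you supply details the paper leaves implicit—notably that the acceptance ratio test of Algorithm~\ref{dfo_nonsmooth_alg} guarantees block-wise descent of $L$, and that the Clarke sum rule holds with equality because the quadratic penalty is smooth, which is what legitimately splits the distance-realizing subgradient of $I_i$ into an element of $\partial_C f_i$ plus the penalty gradient.
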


The proof of Lemma~\ref{lem:inner-exact_nonsmooth} has been omitted for brevity as it follows the same reasoning 
as that of Lemma~\ref{lem:inner-exact}, replacing gradient terms by 
Clarke subgradients and invoking 
Assumption~\ref{assump:subgrad-radius} instead of 
Lemma~\ref{lem:grad_norm_tr_bound}.

\begin{lemma}[Convergence of outer iterations for the nonsmooth case]\label{lem:outer-exact_nonsmooth} 
 Suppose that Assumptions~\ref{ass:fi-and-g-lower-bounded}, ~\ref{ass:closed_form_solution}, and~\ref{ass:aug-lag-bounded} hold. Then for any \(\epsilon_1\), \(\epsilon_2\),\(\epsilon_3\), and \(\epsilon_4 > 0\), within a finite number of outer iterations \(k\), Algorithm 1 finds an approximate Clarke stationary point \((x^{k+1}, \overline{x}^{k+1}, z^{k+1}, y^{k+1})\) of \eqref{eq:standard_admm_form}, satisfying
\begin{equation}
\begin{aligned}
\bar d_1+\bar d_4 &\in \partial_C f(x^{k+1}) +  A^\top y^{k+1}, \\
\bar d_2 &\in \partial g(\overline{x}^{k+1})  + B^\top y^{k+1}, \\
\bar d_3 &= A x^{k+1} + B \overline{x}^{k+1}
\end{aligned}
\label{eq:outer_optimality_nonsmooth}
\end{equation}

\noindent for some $\bar d_1, \bar d_2, \bar d_3, \bar d_4$ satisfying 
$\|\bar d_i\| \leq \epsilon_i$, $i=1,2,3$, $\|\bar{d}_4^k\| \le C_2\,\epsilon_4^k$ with $C_2 > 0$ .

\end{lemma}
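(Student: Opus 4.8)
The plan is to mirror the proof of Lemma~\ref{lem:outer-exact} (the smooth outer-convergence result) almost verbatim, with the gradient equalities replaced by the Clarke-subgradient inclusions supplied by Lemma~\ref{lem:inner-exact_nonsmooth} in place of Lemma~\ref{lem:inner-exact}. At each outer iteration $k$, the inner loop already terminates at some finite $r$ with a point satisfying \eqref{eq:inner_optimality_nonsmooth}, i.e. $\bar d_1^k + \bar d_4^k \in \partial_C f(x^{k,r+1}) + A^\top y^{k,r+1}$ and $\bar d_2^k \in \partial g(\bar{x}^{k,r+1}) + B^\top y^{k,r+1}$, together with $0 = \lambda^k + \beta^k z^{k,r+1} + y^{k,r+1}$ and the residual identity $\bar d_3^k = A x^{k,r+1} + B\bar{x}^{k,r+1} + z^{k,r+1}$, where $\|\bar d_j^k\|\le\epsilon_j^k$ for $j=1,2,3$ and $\|\bar d_4^k\|\le C_2\,\epsilon_4^k$ by Assumption~\ref{assump:subgrad-radius}. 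Since the first two inclusions already hold exactly at every finite outer iteration, the only remaining task is to show that the slack $z^{k+1}$ and the inner tolerances can be driven below the prescribed targets, so that the feasibility residual $A x^{k+1}+B\bar{x}^{k+1}$ falls within $\epsilon_3$ while $\epsilon_j^k\le\epsilon_j$.

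First I would split into the two standard cases according to whether the penalty sequence $\{\beta^k\}$ is bounded. In the unbounded case $\beta^k\to\infty$, I would invoke Assumption~\ref{ass:aug-lag-bounded} together with the augmented-Lagrangian lower bound \eqref{eq:auglag_bound}: since $f$ and $g$ are lower bounded (Assumption~\ref{ass:fi-and-g-lower-bounded}) and the augmented Lagrangian stays below $\bar L$ at each inner initialization, the penalty term $\tfrac{\beta^k}{2}\|A x^{k+1}+B\bar{x}^{k+1}\|^2$ must remain bounded, forcing $A x^{k+1}+B\bar{x}^{k+1}\to 0$ and hence, via the residual identity, $z^{k+1}\to 0$. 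In the bounded case, the amplification rule $\beta^{k+1}=\gamma\beta^k$ in Algorithm~\ref{proposed_alg} fires only finitely often, so the shrinking test $\|z^{k+1}\|\le\omega\|z^k\|$ with $\omega\in(0,1)$ holds for all but finitely many $k$, yielding geometric decay $z^{k+1}\to 0$; combined with $\|\bar d_3^k\|\le\epsilon_3^k\to 0$ this again gives $A x^{k+1}+B\bar{x}^{k+1}\to 0$.

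In either case, since the externally supplied tolerances satisfy $\{\epsilon_{1:4}^k\}\downarrow 0$, after finitely many outer iterations we simultaneously have $\epsilon_j^k\le\epsilon_j$ for $j=1,2,3,4$ and $\|A x^{k+1}+B\bar{x}^{k+1}\|\le\epsilon_3$. Setting $\bar d_i:=\bar d_i^k$ and identifying the third residual with $A x^{k+1}+B\bar{x}^{k+1}$ then delivers \eqref{eq:outer_optimality_nonsmooth}, with the required bounds $\|\bar d_i\|\le\epsilon_i$ $(i=1,2,3)$ and $\|\bar d_4\|\le C_2\epsilon_4$ inherited directly from Lemma~\ref{lem:inner-exact_nonsmooth} and Assumption~\ref{assump:subgrad-radius}.

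I expect the main point of care, rather than a genuine obstacle, to be the bookkeeping of the set-valued inclusions: because \eqref{eq:inner_optimality_nonsmooth} holds \emph{exactly} at each finite outer iteration, no limiting or outer-semicontinuity argument on $\partial_C f$ is required, and the Clarke inclusions carry through unchanged. The only place where nonsmoothness enters nontrivially is the radius bound $\|\bar d_4^k\|\le C_2\epsilon_4^k$, which we do not establish but assume through Assumption~\ref{assump:subgrad-radius} (the Clarke analogue of Lemma~\ref{lem:grad_norm_tr_bound}). Everything governing the slack decay and feasibility depends only on the norms of $z$ and the outer update rule, and is therefore identical to the smooth argument of Lemma~\ref{lem:outer-exact}.
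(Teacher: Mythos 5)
Your proposal is correct and matches the paper's intent exactly: the paper omits this proof, stating only that it ``follows the same logic as the proof of Lemma~\ref{lem:outer-exact},'' and your argument is precisely that logic---the two-case split on boundedness of $\{\beta^k\}$, the augmented-Lagrangian bound forcing $A x^{k+1}+B\overline{x}^{k+1}\to 0$ in the unbounded case, geometric decay of $z^k$ in the bounded case, with gradient equalities replaced by the Clarke inclusions from Lemma~\ref{lem:inner-exact_nonsmooth} and Assumption~\ref{assump:subgrad-radius}. Your observation that no outer-semicontinuity argument on $\partial_C f$ is needed, because the inclusions hold exactly at each finite outer iteration, is a sound and worthwhile clarification.
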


\noindent The proof of the above lemma follows the same logic as the proof of Lemma \ref{lem:outer-exact} and is omitted for brevity.

The conclusions of the convergence lemmas for both the smooth and nonsmooth 
cases are summarized in the following theorem.

\begin{theorem}[Global convergence of inexact DFO-ADMM algorithm]
Suppose Assumptions~\textup{\ref{ass:fi-and-g-lower-bounded}--\ref{ass:aug-lag-bounded}} hold. 
Then, for any given tolerances $\epsilon_1,\,\epsilon_2,\,\epsilon_3,\,\epsilon_4 > 0$, 
Algorithm~\ref{proposed_alg} produces, within finitely many inner and outer iterations, 
an approximate stationary point $(x_\epsilon,\,\bar{x}_\epsilon,\,z_\epsilon,\,y_\epsilon)$, 
with the following cases:
\begin{itemize}
    \item[(i)] If the $x_i$-subproblems are solved using the smooth trust-region method 
    (Algorithm~\ref{dfo_smooth_alg}) under 
    Assumptions~\ref{ass:smooth-lip-gradient}--\ref{ass:hessian-bounded}, 
    the limit point is an approximate KKT point satisfying 
    the conditions in~\eqref{eq:outer_optimality}.
    \item[(ii)] If the $x_i$-subproblems are solved using the nonsmooth trust-region method 
    (Algorithm~\ref{dfo_nonsmooth_alg}) under 
    Assumptions~\ref{ass:local-lipschitz}--\ref{ass:surrogate-growth}, and further \ref{assump:subgrad-radius} holds for the nonsmooth algorithm, the limit point is an approximate Clarke stationary point satisfying 
    the conditions in~\eqref{eq:outer_optimality_nonsmooth}.
\end{itemize}
\end{theorem}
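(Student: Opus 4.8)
The plan is to assemble the theorem directly from the inner- and outer-iteration lemmas already established for each regularity regime, since the statement is essentially a consolidation of those results. I would treat the two cases in parallel, and before invoking the lemmas I would first argue that the nested loops are jointly well-defined: namely, that each inner loop halts after finitely many iterations so that the outer loop is able to advance.

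For case (i), the smooth regime, I would verify that each inner loop terminates at a finite index $r$. This rests on the monotone descent of the augmented Lagrangian established in \Cref{lem:inner-exact} together with its lower boundedness, which forces $B\bar{x}^{k,r+1}-B\bar{x}^{k,r}$, $z^{k,r+1}-z^{k,r}$, and the residual $Ax^{k,r}+B\bar{x}^{k,r}+z^{k,r}$ to vanish; hence the first three criteria in \eqref{eq:inner_stop_criteria} are met at some finite $r$, while the fourth, $\Delta^{k,r+1}\le\epsilon_4^{k,r}$, is met because the smooth trust-region solver drives $\Delta^{k,r,q}\to 0$. Invoking \Cref{lem:inner-exact} then yields the inner optimality system \eqref{eq:inner_optimality} with residuals bounded by $\epsilon_1^k,\epsilon_2^k,\epsilon_3^k$ and $C_1\epsilon_4^k$, the last via the gradient--radius bound of \Cref{lem:grad_norm_tr_bound}. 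Feeding these into \Cref{lem:outer-exact} and following its dichotomy on the penalty sequence $\{\beta^k\}$---unbounded $\beta^k$ forcing the coupling residual $Ax+B\bar{x}\to 0$ through the upper bound of \Cref{ass:aug-lag-bounded}, and bounded $\beta^k$ forcing $z^k\to 0$ through the outer update rule---I would conclude that within finitely many outer iterations the approximate KKT conditions \eqref{eq:outer_optimality} hold.

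Case (ii) follows the same skeleton after replacing gradients by Clarke subgradients throughout. The inner-loop finiteness argument is unchanged, since it depends only on the augmented-Lagrangian descent and the decay $\Delta^{k,r,q}\to 0$ available for the nonsmooth solver; the role of \Cref{lem:grad_norm_tr_bound} is now taken over by \Cref{assump:subgrad-radius}, which supplies $\|\bar{d}_4^k\|\le C_2\epsilon_4^k$. I would then invoke \Cref{lem:inner-exact_nonsmooth} and \Cref{lem:outer-exact_nonsmooth} exactly as above to obtain the approximate Clarke-stationarity conditions \eqref{eq:outer_optimality_nonsmooth}.

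Since all the analytical content is carried by the preceding lemmas, I expect the proof itself to be short, and the only genuine obstacle is bookkeeping on the interaction of the two loops with the diminishing tolerance sequences $\{\epsilon_{1:4}^k\}\downarrow 0$. In particular, I would take care that the fourth inner criterion is attainable in a way that actually certifies the $d_4$ bound: it requires $\epsilon_4^{k,r}$ to be small enough that termination occurs in the criticality branch (Step~\ref{step:1.2}) of Algorithm~\ref{dfo_smooth_alg}, which is precisely the hypothesis under which \Cref{lem:grad_norm_tr_bound} is valid. Absent this, $\Delta^{k,r+1}$ still drops below $\epsilon_4^{k,r}$ and the loop halts, but the stationarity residual $d_4^k$ could not be certified against $C_1\epsilon_4^k$, so the final KKT/Clarke guarantee would not follow. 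I would therefore note that, as the tolerances decrease over the outer iterations, this hypothesis is eventually satisfied and the guarantees are driven by those later iterations.
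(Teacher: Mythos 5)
Your proposal is correct and matches the paper's approach: the paper presents this theorem explicitly as a summary of Lemmas~\ref{lem:inner-exact}, \ref{lem:outer-exact}, \ref{lem:inner-exact_nonsmooth}, and \ref{lem:outer-exact_nonsmooth}, offering no separate proof, and your assembly of those lemmas (including the caveat that the $d_4$ bound requires termination in the criticality branch, which the paper relegates to its remarks) is exactly the intended argument.
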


\subsection{Parameter Tuning}
We recommend the following parameter setting strategy for Algorithm~\ref{proposed_alg}. 
The inner and outer tolerances are updated adaptively as
\begin{equation}\label{eq:parameter_tuning}
\begin{aligned}
&\epsilon_1^k=\max\!\left(\tfrac{c_1}{a_1^{\,k}},\,\epsilon_1\right), \qquad &&\epsilon_4^k=\epsilon_1^k,\\[0.25em]
&\epsilon_2^k=\max\!\left(\tfrac{c_2}{a_2^{\,k}},\,\epsilon_2\right), \qquad &&\epsilon_3^k=\tfrac{\epsilon_3}{\epsilon_1}\,\epsilon_1^k,\\[0.25em]
&\epsilon_4^{k,r}=\max\!\left(c_4\,(\epsilon_1^{k,r})^{a_4},\,\epsilon_4^k\right),
\end{aligned}
\end{equation}
with \(a_1,a_2,a_4\in[1,2]\) and \(c_1,c_2,c_4\in[1,2]\). 
The specific values of \(a_i\) and \(c_i\) influence only the relative inner--outer iteration counts, 
while overall convergence is preserved.

The outer parameters \(\omega\) (slack decay threshold) and \(\gamma\) (penalty growth) 
control the trade-off between speed and conditioning: 
smaller \(\omega\) or larger \(\gamma\) accelerate slack reduction but can worsen conditioning 
and increase inner cost, whereas conservative values slow outer progress. 
In practice, intermediate choices balance this trade-off effectively. 
Parameter settings for the underlying trust-region DFO solvers may follow the recommendations 
in \cite{conn2009global} (smooth) and \cite{Liuzzi2019} (nonsmooth).

\section{Numerical Examples}\label{sec:numerical}
This section presents numerical case studies for our two-level inexact ADMM–DFO framework, compared against standard monolithic derivative-free baselines under identical stopping criteria and increasing dimensions. We report implementation details, wall-clock time, function evaluations, and accuracy, and include a distributed training problem in nonconvex, nonsmooth settings. Reproducible code is available at our GitHub repository.\footnote{\url{https://github.com/dfasiku/Two-level-inexact-admm-dfo}}

\subsection{A convex and smooth problem}

We first consider the minimization of the $N$-dimensional ARWHEAD function, a benchmark test problem in derivative-free optimization taken from \cite{Conn1994}. The function is defined as
\begin{equation}
f(u) = \sum_{i=1}^{N-1} \left[\, (u_i^2 + u_N^2)^2 - 4u_i + 3 \,\right],
\label{eq:arrowhead_function}
\end{equation}
which is smooth and convex, with a global minimum $f^* = 0$ attained at $u_i = 1$ for $i = 1,\ldots,N-1$ and $u_N = 0$. The problem is inherently non-separable because the last variable $u_N$ appears in every quartic term $(u_i^2 + u_N^2)^2$, thereby coupling $u_N$ with all other variables simultaneously. We assume that only the zero-order information of the function is available necessitating the use of a derivative-free solver.

To demonstrate the scalability limitations of monolithic approaches, we benchmark two established DFO solvers. The first, \textsc{NEWUOA} \cite{Powell2006}, accessed through the NLopt library, is a trust-region-based method known for excellent practical performance on smooth problems in moderate to large dimensions, though it lacks theoretical convergence guarantees. As a baseline, we also test the Nelder--Mead method via the \texttt{scipy.optimize.minimize} interface in Python. All runs are initialized from the zero vector and terminated either upon reaching an objective accuracy of $10^{-5}$ or exhausting a budget of $10^6$ function evaluations. Computations are carried out on a 12-core (24-thread) Intel processor @ 2.11\,GHz with 64~GB RAM, running a 64-bit OS and Python~3.13. Table~\ref{tab:arwhead_results} reports the performance across increasing problem dimensions. While NEWUOA consistently reached the target accuracy, its runtime grew rapidly with dimension. Nelder--Mead converged only up to $N=100$, after which it exhausted the evaluation budget with poor solutions, illustrating the curse of dimensionality in classical DFO methods.

We then apply our proposed ADMM--DFO framework (Algorithm~\ref{proposed_alg}), which decomposes the $N$-dimensional Arrowhead problem into $N_B$ blocks where each block contains four block-local variables and a local copy of a globally shared variable (with the last block possibly smaller). This block structure renders the global objective separable while introducing linear consensus constraints consistent with the formulation in \eqref{eq:sep_opt_problem}. The constraint matrices $A$ and $B$ are chosen so that the last coordinate of each block equals the shared global variable, thereby enforcing consensus across all blocks. The algorithm was parameterized with initial penalty $\beta^1 = 20$, adaptive tolerance parameters $c_1=c_2=c_4=1$, $a_1=a_2=2$, $a_4=1.5$, final tolerances $\epsilon_1=\epsilon_2=\epsilon_3=\epsilon_4=10^{-5}$, and penalty update factors $\omega = 0.75$, $\gamma = 1.005$. Primal and dual variables are initialized uniformly in $[0,1]$. The block subproblems are solved in parallel across the 24 available threads using the \texttt{concurrent.futures.ThreadPoolExecutor} in Python~3.13t; for $N_B>24$, the subproblems are processed in batches of 24. Because function evaluations occur blockwise in parallel on a partially separable objective with overlaps, we report for ADMM--DFO the \emph{maximum per-block evaluations} as a proxy for the monolithic function evaluation count, which conservatively upper-bounds the serial total while reflecting parallel execution. 

As seen in Table~\ref{tab:arwhead_results}, at small dimensions ADMM--DFO underperforms compared to monolithic solvers because distributed overhead dominates. However, as the dimension grows, ADMM--DFO scales effectively, yielding substantial speedups in run time with controlled function evaluations. For $N=1200$, Figure~\ref{fig:arwhead_convergence} shows the convergence of the augmented Lagrangian, $\epsilon_1^{k,r}$, $\epsilon_2^{k,r}$, and $\epsilon_3^{k,r}$ across outer iterations, confirming convergence to a stationary point. This demonstrates our framework's ability to handle non-separable problems with complex variable interactions while providing convergence guarantees in high-dimensional and derivative-free settings.

\FloatBarrier
\begin{table*}[t]
\centering
\caption{Results for the ARWHEAD function with Nelder--Mead, NEWUOA, and ADMM--DFO solvers. Bold indicates the least computational time and least number of function evaluations (\#Fev) per row. $\dagger$: The maximum number of evaluations was reached.}
\label{tab:arwhead_results}

\begingroup
\setlength{\tabcolsep}{2.5pt}
\renewcommand{\arraystretch}{1.0}
\small

\begin{adjustbox}{max width=\textwidth}
\begin{tabular}{cccccccccccc}
\toprule
\multirow{2}{*}{$N$}
& \multicolumn{4}{c}{Nelder--Mead}
& \multicolumn{4}{c}{NEWUOA}
& \multicolumn{3}{c}{ADMM--DFO} \\
\cmidrule(lr){2-5} \cmidrule(lr){6-9} \cmidrule(lr){10-12}
& Final Obj. & Time (s) & \#Fev & ~
& Final Obj. & Time (s) & \#Fev & ~
& Final Obj. & Time (s) & \#Fev \\
\midrule
10   & $8.51\times10^{-6}$ & 0.06   & 3{,}433 & ~
     & $8.91\times10^{-6}$ & \textbf{0.00} & \textbf{111} & ~
     & $5.45\times10^{-9}$ & 6.11   & 3{,}548 \\
50   & $9.57\times10^{-6}$ & 2.45   & 105{,}785 & ~
     & $9.43\times10^{-6}$ & \textbf{0.10} & \textbf{974}  & ~
     & $1.56\times10^{-7}$ & 12.61  & 5{,}299 \\
100  & $9.99\times10^{-6}$ & 26.17  & 689{,}259 & ~
     & $9.60\times10^{-6}$ & \textbf{0.37} & \textbf{1{,}487} & ~
     & $5.86\times10^{-6}$ & 18.37  & 4{,}659 \\
200  & $2.05\times10^{2}$  & 69.35  & $\dagger$ & ~
     & $9.95\times10^{-6}$ & \textbf{4.84} & \textbf{3{,}205} & ~
     & $7.30\times10^{-7}$ & 55.83  & 7{,}111 \\
300  & $5.14\times10^{2}$  & 606.33 & $\dagger$ & ~
     & $9.81\times10^{-6}$ & \textbf{19.27} & \textbf{5{,}474} & ~
     & $6.27\times10^{-6}$ & 65.00  & 6{,}865 \\
400  & $1.11\times10^{3}$  & 986.18 & $\dagger$ & ~
     & $9.95\times10^{-6}$ & \textbf{63.57} & 9{,}940 & ~
     & $2.03\times10^{-6}$ & 131.70 & \textbf{9{,}135} \\
500  & $1.48\times10^{3}$  & 1{,}498.29 & $\dagger$ & ~
     & $9.98\times10^{-6}$ & 175.51 & 15{,}483 & ~
     & $3.17\times10^{-7}$ & \textbf{145.00} & \textbf{9{,}399} \\
600  & $1.79\times10^{3}$  & 2{,}449.33 & $\dagger$ & ~
     & $1.00\times10^{-5}$ & 299.29 & 15{,}993 & ~
     & $6.67\times10^{-6}$ & \textbf{204.00} & \textbf{10{,}227} \\
700  & $2.10\times10^{3}$  & 3{,}299.42 & $\dagger$ & ~
     & $9.99\times10^{-6}$ & 661.38 & 24{,}562 & ~
     & $2.90\times10^{-6}$ & \textbf{263.51} & \textbf{11{,}564} \\
1000 & $3.00\times10^{3}$  & 6{,}635.10 & $\dagger$ & ~
     & $9.98\times10^{-6}$ & 3{,}080.57 & 37{,}518 & ~
     & $2.20\times10^{-6}$ & \textbf{471.86} & \textbf{11{,}564} \\
1200 & $3.60\times10^{3}$  & 9{,}471.60 & $\dagger$ & ~
     & $9.98\times10^{-6}$ & 5{,}968.59 & 54{,}906 & ~
     & $2.65\times10^{-6}$ & \textbf{541.15} & \textbf{13{,}429} \\
\bottomrule
\end{tabular}
\end{adjustbox}
\endgroup
\end{table*}

\begin{figure}[htbp]
\centering
\includegraphics[width=0.6\textwidth]{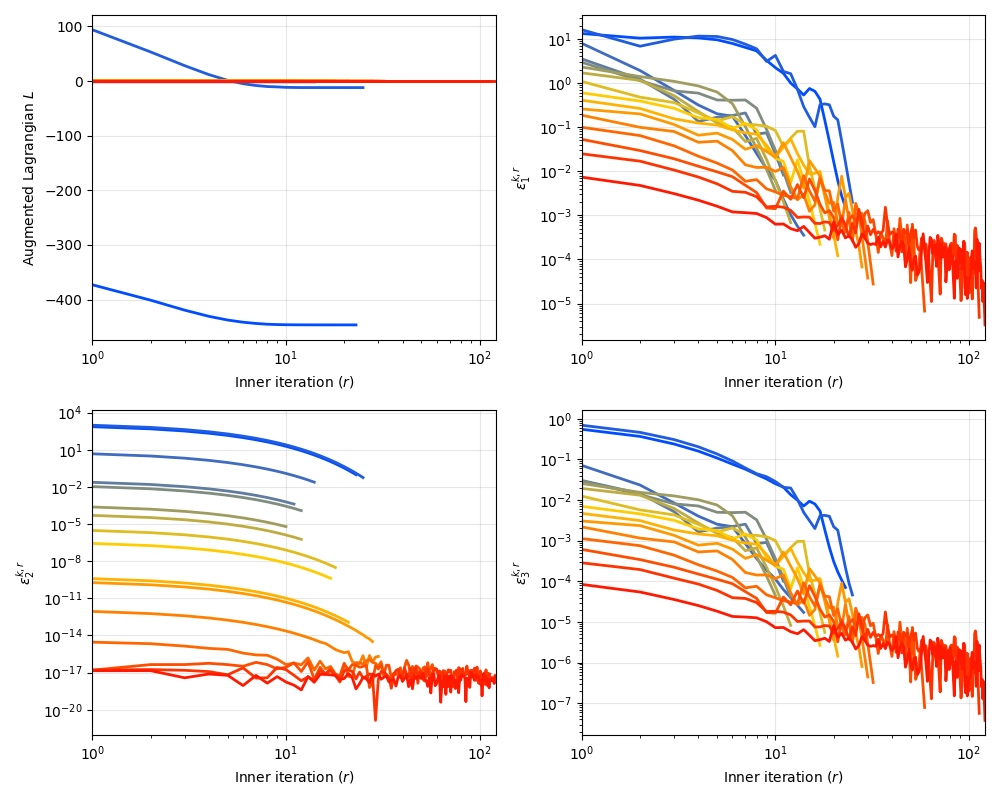}
\caption{Convergence of the augmented Lagrangian and residual norms for the 1200-dimensional ARWHEAD problem. Colors indicate progression of outer iterations from blue (initial) to red (final).}
\label{fig:arwhead_convergence}
\end{figure}

\vspace{5mm}
\subsection{A nonconvex and smooth problem}

We next consider the $N$-dimensional Rosenbrock function, a classical smooth but nonconvex benchmark:
\begin{equation}
f(u) \;=\; \sum_{i=1}^{N-1} \big[\, 100\,(u_{i+1}-u_i^2)^2 + (1-u_i)^2 \,\big],
\label{eq:rosenbrock_function}
\end{equation}
which has global minimizer $u^\star=(1,\ldots,1)$ with $f^\star=0$. The problem exhibits chain-structured coupling through the terms $100\,(u_{i+1}-u_i^2)^2$, making it inherently non-separable. 

Table~\ref{tab:rosen_results} reports the performance of centralized solvers under the same setup as the Arrowhead case (zero initialization, budget of $10^6$ function evaluations, target accuracy $10^{-5}$, same computing environment). NEWUOA consistently reached the required accuracy but with rapidly increasing runtime, while Nelder--Mead failed to converge beyond $N=50$, exhausting the evaluation budget. In contrast, our ADMM--DFO framework exploits the chain structure by decomposing the decision vector into overlapping blocks of size at most four, with neighboring blocks overlapping in shared variables. Consensus constraints enforce agreement between overlaps, yielding a separable formulation consistent with \eqref{eq:sep_opt_problem}. Algorithmic parameters followed the Arrowhead setup except with an initial penalty parameter $\beta^1=20$, and block subproblems are solved in parallel across the 24 available threads (batched when $N_B>24$). 
The results in Table~\ref{tab:rosen_results} show a similar trend to the earlier case study: for small problem sizes, ADMM--DFO underperforms monolithic methods because coordination costs outweigh the benefits of distribution. As the dimension grows, the method gains efficiency, producing notable reductions in run time with controlled function evaluations. For $N=500$, Figure~\ref{fig:rosen_convergence} shows the as convergence of the augmented Lagrangian and residual norms
$\epsilon_1^{k,r}$, $\epsilon_2^{k,r}$, and $\epsilon_3^{k,r}$ across outer iterations, confirming convergence to a stationary point in this nonconvex setting. These results further demonstrate the scalability of the proposed distributed framework in high-dimensional DFO problems.

\begin{table*}[t]
\centering
\caption{Results for the Rosenbrock function with Nelder--Mead, NEWUOA, and ADMM--DFO solvers. Bold indicates the least computational time and least number of function evaluations (\#Fev) per row. $\dagger$: The maximum number of evaluations was reached.}
\label{tab:rosen_results}

\begingroup
\setlength{\tabcolsep}{2.5pt}
\renewcommand{\arraystretch}{1.0}
\small 

\begin{adjustbox}{max width=\textwidth}
\begin{tabular}{cccccccccccc}
\toprule
\multirow{2}{*}{$N$}
& \multicolumn{4}{c}{Nelder--Mead}
& \multicolumn{4}{c}{NEWUOA}
& \multicolumn{3}{c}{ADMM--DFO} \\
\cmidrule(lr){2-5} \cmidrule(lr){6-9} \cmidrule(lr){10-12}
& Final Obj. & Time (s) & \#Fev & ~
& Final Obj. & Time (s) & \#Fev & ~
& Final Obj. & Time (s) & \#Fev \\
\midrule
10   & $8.86\times10^{-6}$ & 0.06   & 2{,}970 & ~
     & $8.80\times10^{-6}$ & \textbf{0.02} & \textbf{635} & ~
     & $7.75\times10^{-8}$ & 19.53  & 9{,}407 \\
50   & $9.83\times10^{-6}$ & 16.76  & 643{,}991 & ~
     & $9.98\times10^{-6}$ & \textbf{0.77} & \textbf{9{,}701} & ~
     & $4.25\times10^{-7}$ & 100.15 & 28{,}031 \\
100  & $7.80\times10^{1}$  & 34.90  & $\dagger$ & ~
     & $9.88\times10^{-6}$ & \textbf{13.93} & \textbf{35{,}674} & ~
     & $5.50\times10^{-7}$ & 174.18 & 23{,}701 \\
200  & $1.89\times10^{2}$  & 64.31  & $\dagger$ & ~
     & $9.47\times10^{-6}$ & \textbf{247.84} & \textbf{133{,}515} & ~
     & $1.30\times10^{-6}$ & 577.76 & 46{,}763 \\
300  & $2.91\times10^{2}$  & 670.52 & $\dagger$ & ~
     & $9.90\times10^{-6}$ & 1{,}317.36 & 291{,}459 & ~
     & $1.51\times10^{-6}$ & \textbf{996.73} & \textbf{51{,}763} \\
400  & $3.94\times10^{2}$  & 994.71 & $\dagger$ & ~
     & $9.98\times10^{-6}$ & 4{,}394.20 & 507{,}477 & ~
     & $2.39\times10^{-6}$ & \textbf{1{,}517.72} & \textbf{62{,}903} \\
500  & $4.95\times10^{2}$  & 1{,}405.15 & $\dagger$ & ~
     & $9.80\times10^{-6}$ & 20{,}211.24 & 841{,}989 & ~
     & $2.17\times10^{-6}$ & \textbf{2{,}696.52} & \textbf{85{,}719} \\
\bottomrule
\end{tabular}
\end{adjustbox}
\endgroup
\end{table*}

\begin{figure}[htbp] 
\centering
\includegraphics[width=0.6\textwidth]{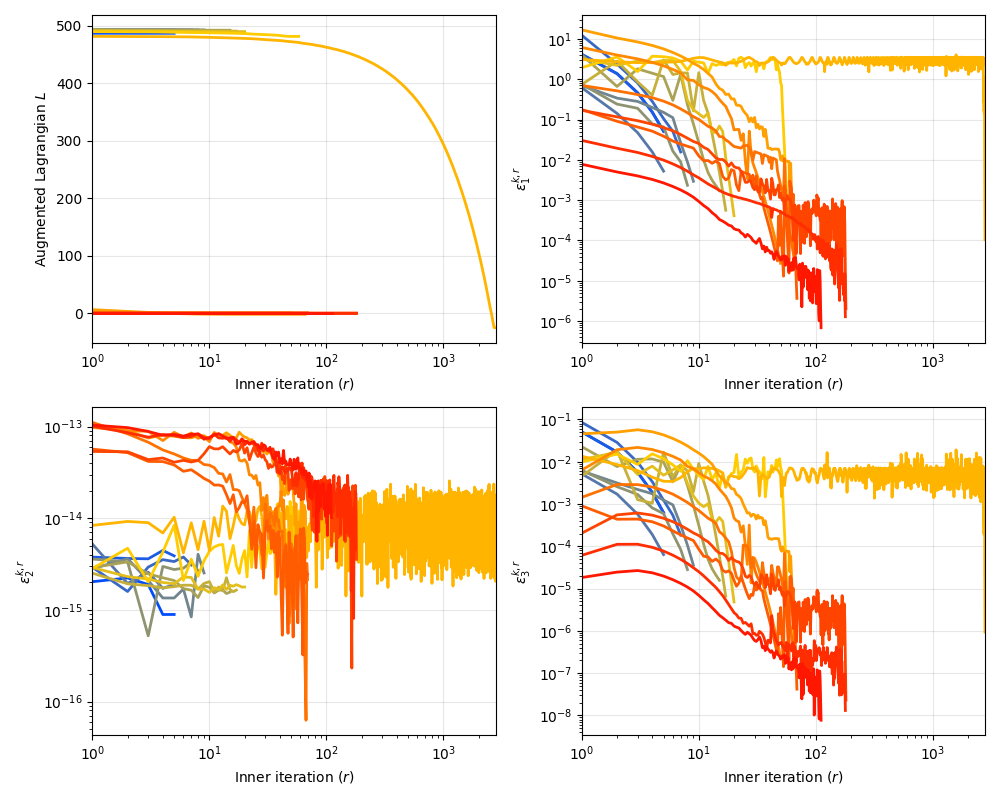}
\caption{Convergence of the augmented Lagrangian and residual norms for the 500-dimensional Rosenbrock problem. Colors indicate progression of outer iterations from blue (initial) to red (final).}
\label{fig:rosen_convergence}
\end{figure}

\subsection{Distributed Nonconvex Nonsmooth Neural Network Training with Consensus}

We now consider a distributed nonconvex and nonsmooth optimization problem arising from multi-agent neural network training under consensus constraints, whose formulation—studied in federated optimization settings in \cite{Qiu2023}—is adapted here. In particular, we solve a problem with \( N = 10 \) agents, each holding a local subset of the Banknote Authentication dataset \cite{Lohweg2012} with \( n_i = 100 \) samples and feature dimension \( 4 \). The global objective is to train a single-hidden-layer ReLU network with \( N_1 = 2 \) hidden units, where all agents must agree on a common weight vector \( \bar{x} \in \mathbb{R}^{M} \), with \( M = N_1(N_0 + 1) = 10 \). The optimization problem is:

\begin{equation}
\min_{x_1,\dots,x_N,\bar{x}} \sum_{i=1}^N f_i(x_i) \quad \text{subject to} \quad x_i = \bar{x}, \quad i = 1,\dots,N,
\end{equation}

\noindent where each local objective \( f_i \) is given by:
\begin{equation}
f_i(x_i) = \frac{1}{2N} \sum_{j=1}^{n_i} \left( y_{ij} - w^\top \max(0, Z^\top u_{ij}) \right)^2 
+ \frac{\lambda}{2N} \left( \|Z\|_F^2 + \|w\|^2 \right).
\end{equation}

Here, \( x_i = [\text{vec}(Z); w] \), \( Z \in \mathbb{R}^{4 \times 2} \) are the hidden-layer weights, \( w \in \mathbb{R}^{2} \) is the output-layer weight vector, \( u_{ij} \in \mathbb{R}^4 \) is the feature vector of the \( j \)-th sample of agent \( i \), and \( y_{ij} \in \{-1, +1\} \) is the corresponding label. The ReLU activation introduces nonsmoothness, while the squared loss with weight decay is nonconvex due to the hidden-layer structure, and since analytic gradients are not assumed available, each \( f_i \) is treated as a black-box function.

The dataset (1372 samples) was randomly split into 1000 training samples (distributed equally across agents) and 372 validation samples. Each agent thus received 100 training samples. Using the same algorithmic parameters as in the previous examples, Figure~\ref{fig:relu_nn_residual} shows the convergence of the augmented Lagrangian and residuals \( \epsilon_1^{k,r} \), \( \epsilon_2^{k,r} \), and \( \epsilon_3^{k,r} \) to zero for \(\beta^1 = 7\), confirming convergence even in this challenging nonsmooth nonconvex learning problem. The solution strategy achieved a validation accuracy of 95.2\% on the held-out set, illustrating the effectiveness of the method.

\begin{figure}[h]
\centering
\includegraphics[width=0.7\textwidth]{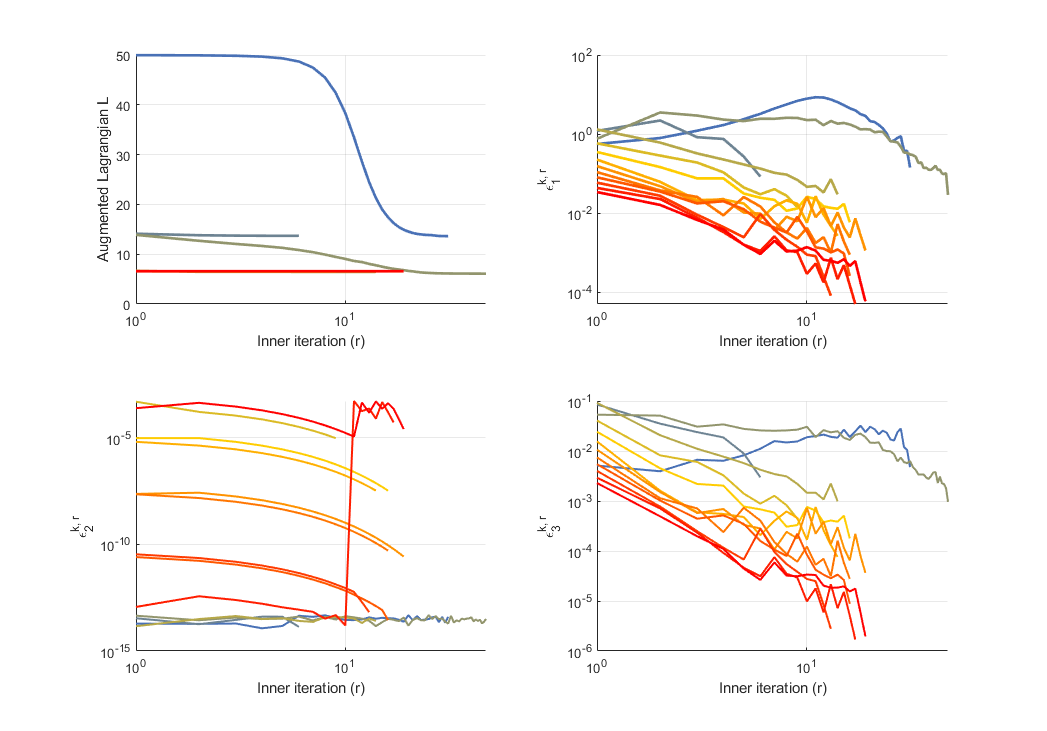}
\caption{Convergence of the augmented Lagrangian and residual norms for the nonconvex nonsmooth problem. Colors indicate progression of outer iterations from blue (initial) to red (final).}
\label{fig:relu_nn_residual}
\end{figure}

\section{Conclusion}\label{sec:conclusion}

This paper has presented a novel distributed algorithm for separable, high-dimensional, nonconvex optimization in derivative-free settings with general linear constraints. The method combines a two-level inexact ADMM framework, which effectively handles complex coupling structures beyond simple consensus problems, with derivative-free trust-region solvers for handling black-box objectives. We have established convergence under mild assumptions in both smooth and nonsmooth cases. In the smooth setting, approximate stationarity points are obtained by exploiting a computable bound on the gradient norm and trust region radius, while in the nonsmooth setting we make an ad hoc assumption for the existence of such a bound, ensuring convergence to approximate Clarke stationarity.  A key innovation is the use of the trust-region radius as a practical measure of stationarity, which provides a systematic mechanism for controlling inexactness while preserving efficiency.

Two categories of problems were highlighted. First, structured high-dimensional DFO problems, where exploiting problem structure enables decomposition into tractable subproblems. Second, distributed consensus optimization in DFO settings, where our framework enforces agreement across agents. Numerical experiments demonstrated that, for challenging large-scale problems, the distributed approach achieves substantial speedups over monolithic solvers. In some instances, problems with more than 200 subproblems were solved efficiently on hardware limited to 24 threads, suggesting even greater gains on high-performance computing platforms. 

Future directions include extending the framework beyond stationary-point convergence to global optimization by integrating data-driven branch-and-bound techniques with the ADMM–DFO solver, constructing convex underestimators from sampled evaluations to provide rigorous lower bounds in derivative-free settings. Additional research directions include deriving explicit convergence-rate guarantees and incorporating surrogate modeling to reduce function evaluations. Together, these extensions will further improve the robustness and scalability of the proposed framework, enabling its application to a wider class of distributed derivative-free optimization problems.

\bmhead{Funding}

This research was supported by the UNC System Research Opportunities Initiative (ROI).

\section*{Declarations}

The authors declare that they have no conflict of interest.

\bibliography{sn-bibliography}


\begin{thebibliography}{54}
\ifx \bisbn   \undefined \def \bisbn  #1{ISBN #1}\fi
\ifx \binits  \undefined \def \binits#1{#1}\fi
\ifx \bauthor  \undefined \def \bauthor#1{#1}\fi
\ifx \batitle  \undefined \def \batitle#1{#1}\fi
\ifx \bjtitle  \undefined \def \bjtitle#1{#1}\fi
\ifx \bvolume  \undefined \def \bvolume#1{\textbf{#1}}\fi
\ifx \byear  \undefined \def \byear#1{#1}\fi
\ifx \bissue  \undefined \def \bissue#1{#1}\fi
\ifx \bfpage  \undefined \def \bfpage#1{#1}\fi
\ifx \blpage  \undefined \def \blpage #1{#1}\fi
\ifx \burl  \undefined \def \burl#1{\textsf{#1}}\fi
\ifx \doiurl  \undefined \def \doiurl#1{\url{https://doi.org/#1}}\fi
\ifx \betal  \undefined \def \betal{\textit{et al.}}\fi
\ifx \binstitute  \undefined \def \binstitute#1{#1}\fi
\ifx \binstitutionaled  \undefined \def \binstitutionaled#1{#1}\fi
\ifx \bctitle  \undefined \def \bctitle#1{#1}\fi
\ifx \beditor  \undefined \def \beditor#1{#1}\fi
\ifx \bpublisher  \undefined \def \bpublisher#1{#1}\fi
\ifx \bbtitle  \undefined \def \bbtitle#1{#1}\fi
\ifx \bedition  \undefined \def \bedition#1{#1}\fi
\ifx \bseriesno  \undefined \def \bseriesno#1{#1}\fi
\ifx \blocation  \undefined \def \blocation#1{#1}\fi
\ifx \bsertitle  \undefined \def \bsertitle#1{#1}\fi
\ifx \bsnm \undefined \def \bsnm#1{#1}\fi
\ifx \bsuffix \undefined \def \bsuffix#1{#1}\fi
\ifx \bparticle \undefined \def \bparticle#1{#1}\fi
\ifx \barticle \undefined \def \barticle#1{#1}\fi
\bibcommenthead
\ifx \bconfdate \undefined \def \bconfdate #1{#1}\fi
\ifx \botherref \undefined \def \botherref #1{#1}\fi
\ifx \url \undefined \def \url#1{\textsf{#1}}\fi
\ifx \bchapter \undefined \def \bchapter#1{#1}\fi
\ifx \bbook \undefined \def \bbook#1{#1}\fi
\ifx \bcomment \undefined \def \bcomment#1{#1}\fi
\ifx \oauthor \undefined \def \oauthor#1{#1}\fi
\ifx \citeauthoryear \undefined \def \citeauthoryear#1{#1}\fi
\ifx \endbibitem  \undefined \def \endbibitem {}\fi
\ifx \bconflocation  \undefined \def \bconflocation#1{#1}\fi
\ifx \arxivurl  \undefined \def \arxivurl#1{\textsf{#1}}\fi
\csname PreBibitemsHook\endcsname

\bibitem[\protect\citeauthoryear{Liang et~al.}{2024}]{liang2024}
\begin{barticle}
\bauthor{\bsnm{Liang}, \binits{R.}},
\bauthor{\bsnm{Han}, \binits{Y.}},
\bauthor{\bsnm{Hu}, \binits{H.}},
\bauthor{\bsnm{Chen}, \binits{B.}},
\bauthor{\bsnm{Yuan}, \binits{Z.}},
\bauthor{\bsnm{Biegler}, \binits{L.T.}}:
\batitle{Efficient trust region filter modeling strategies for computationally expensive black-box optimization}.
\bjtitle{Computers \& Chemical Engineering}
\bvolume{189},
\bfpage{108816}
(\byear{2024})
\end{barticle}
\endbibitem

\bibitem[\protect\citeauthoryear{Conn et~al.}{2009}]{conn2009}
\begin{bbook}
\bauthor{\bsnm{Conn}, \binits{A.R.}},
\bauthor{\bsnm{Scheinberg}, \binits{K.}},
\bauthor{\bsnm{Vicente}, \binits{L.N.}}:
\bbtitle{Introduction to Derivative-Free Optimization}.
\bpublisher{Society for Industrial and Applied Mathematics},
\blocation{Philadelphia}
(\byear{2009})
\end{bbook}
\endbibitem

\bibitem[\protect\citeauthoryear{Boukouvala and Ierapetritou}{2014}]{boukouvala2014}
\begin{barticle}
\bauthor{\bsnm{Boukouvala}, \binits{F.}},
\bauthor{\bsnm{Ierapetritou}, \binits{M.G.}}:
\batitle{Derivative‐free optimization for expensive constrained problems using a novel expected improvement objective function}.
\bjtitle{AIChE Journal}
\bvolume{60}(\bissue{7}),
\bfpage{2462}--\blpage{2474}
(\byear{2014})
\end{barticle}
\endbibitem

\bibitem[\protect\citeauthoryear{Bergstra et~al.}{2011}]{bergstra2011}
\begin{bchapter}
\bauthor{\bsnm{Bergstra}, \binits{J.}},
\bauthor{\bsnm{Bardenet}, \binits{R.}},
\bauthor{\bsnm{Bengio}, \binits{Y.}},
\bauthor{\bsnm{K{\'e}gl}, \binits{B.}}:
\bctitle{Algorithms for hyper-parameter optimization}.
In: \bbtitle{Advances in Neural Information Processing Systems 24 (NeurIPS 2011)}
(\byear{2011})
\end{bchapter}
\endbibitem

\bibitem[\protect\citeauthoryear{Zaryab et~al.}{2022}]{zaryab2022}
\begin{bchapter}
\bauthor{\bsnm{Zaryab}, \binits{S.A.}},
\bauthor{\bsnm{Manno}, \binits{A.}},
\bauthor{\bsnm{Martelli}, \binits{E.}}:
\bctitle{Scr: a novel surrogate-based global optimization algorithm for constrained black-box problems}.
In: \bbtitle{Computer Aided Chemical Engineering}
vol. \bseriesno{51},
pp. \bfpage{1213}--\blpage{1218}.
\bpublisher{Elsevier},
\blocation{Amsterdam}
(\byear{2022})
\end{bchapter}
\endbibitem

\bibitem[\protect\citeauthoryear{Terayama et~al.}{2021}]{terayama2021}
\begin{barticle}
\bauthor{\bsnm{Terayama}, \binits{K.}},
\bauthor{\bsnm{Sumita}, \binits{M.}},
\bauthor{\bsnm{Tamura}, \binits{R.}},
\bauthor{\bsnm{Tsuda}, \binits{K.}}:
\batitle{Black-box optimization for automated discovery}.
\bjtitle{Accounts of Chemical Research}
\bvolume{54}(\bissue{6}),
\bfpage{1334}--\blpage{1346}
(\byear{2021})
\end{barticle}
\endbibitem

\bibitem[\protect\citeauthoryear{Larson et~al.}{2019}]{larson2019}
\begin{barticle}
\bauthor{\bsnm{Larson}, \binits{J.}},
\bauthor{\bsnm{Menickelly}, \binits{M.}},
\bauthor{\bsnm{Wild}, \binits{S.M.}}:
\batitle{Derivative-free optimization methods}.
\bjtitle{Acta Numerica}
\bvolume{28},
\bfpage{287}--\blpage{404}
(\byear{2019})
\end{barticle}
\endbibitem

\bibitem[\protect\citeauthoryear{Powell}{2003}]{powell2003}
\begin{barticle}
\bauthor{\bsnm{Powell}, \binits{M.J.}}:
\batitle{On trust region methods for unconstrained minimization without derivatives}.
\bjtitle{Mathematical Programming}
\bvolume{97},
\bfpage{605}--\blpage{623}
(\byear{2003})
\end{barticle}
\endbibitem

\bibitem[\protect\citeauthoryear{Boukouvala et~al.}{2016}]{boukouvala2016}
\begin{barticle}
\bauthor{\bsnm{Boukouvala}, \binits{F.}},
\bauthor{\bsnm{Misener}, \binits{R.}},
\bauthor{\bsnm{Floudas}, \binits{C.A.}}:
\batitle{Global optimization advances in mixed-integer nonlinear programming, minlp, and constrained derivative-free optimization, cdfo}.
\bjtitle{European Journal of Operational Research}
\bvolume{252}(\bissue{3}),
\bfpage{701}--\blpage{727}
(\byear{2016})
\end{barticle}
\endbibitem

\bibitem[\protect\citeauthoryear{Rios and Sahinidis}{2013}]{rios2013}
\begin{barticle}
\bauthor{\bsnm{Rios}, \binits{L.M.}},
\bauthor{\bsnm{Sahinidis}, \binits{N.V.}}:
\batitle{Derivative-free optimization: a review of algorithms and comparison of software implementations}.
\bjtitle{Journal of Global Optimization}
\bvolume{56}(\bissue{3}),
\bfpage{1247}--\blpage{1293}
(\byear{2013})
\end{barticle}
\endbibitem

\bibitem[\protect\citeauthoryear{Frazier}{2018}]{frazier2018}
\begin{botherref}
\oauthor{\bsnm{Frazier}, \binits{P.I.}}:
A tutorial on {B}ayesian optimization
(2018)
\end{botherref}
\endbibitem

\bibitem[\protect\citeauthoryear{Shahriari et~al.}{2015}]{shahriari2015}
\begin{barticle}
\bauthor{\bsnm{Shahriari}, \binits{B.}},
\bauthor{\bsnm{Swersky}, \binits{K.}},
\bauthor{\bsnm{Wang}, \binits{Z.}},
\bauthor{\bsnm{Adams}, \binits{R.P.}},
\bauthor{\bsnm{De~Freitas}, \binits{N.}}:
\batitle{Taking the human out of the loop: A review of {B}ayesian optimization}.
\bjtitle{Proceedings of the IEEE}
\bvolume{104}(\bissue{1}),
\bfpage{148}--\blpage{175}
(\byear{2015})
\end{barticle}
\endbibitem

\bibitem[\protect\citeauthoryear{Wang and Dowling}{2022}]{wang2022}
\begin{barticle}
\bauthor{\bsnm{Wang}, \binits{K.}},
\bauthor{\bsnm{Dowling}, \binits{A.W.}}:
\batitle{Bayesian optimization for chemical products and functional materials}.
\bjtitle{Current Opinion in Chemical Engineering}
\bvolume{36},
\bfpage{100728}
(\byear{2022})
\end{barticle}
\endbibitem

\bibitem[\protect\citeauthoryear{Lu et~al.}{2020}]{lu2020}
\begin{botherref}
\oauthor{\bsnm{Lu}, \binits{Q.}},
\oauthor{\bsnm{Kumar}, \binits{R.}},
\oauthor{\bsnm{Zavala}, \binits{V.M.}}:
MPC controller tuning using {B}ayesian optimization techniques
(2020)
\end{botherref}
\endbibitem

\bibitem[\protect\citeauthoryear{Kudva et~al.}{2022}]{kudva2022}
\begin{barticle}
\bauthor{\bsnm{Kudva}, \binits{A.}},
\bauthor{\bsnm{Sorourifar}, \binits{F.}},
\bauthor{\bsnm{Paulson}, \binits{J.A.}}:
\batitle{Constrained robust {B}ayesian optimization of expensive noisy black‐box functions with guaranteed regret bounds}.
\bjtitle{AIChE Journal}
\bvolume{68}(\bissue{12}),
\bfpage{17857}
(\byear{2022})
\end{barticle}
\endbibitem

\bibitem[\protect\citeauthoryear{Eason and Biegler}{2016}]{eason2016}
\begin{barticle}
\bauthor{\bsnm{Eason}, \binits{J.P.}},
\bauthor{\bsnm{Biegler}, \binits{L.T.}}:
\batitle{A trust region filter method for glass box/black box optimization}.
\bjtitle{AIChE Journal}
\bvolume{62}(\bissue{9}),
\bfpage{3124}--\blpage{3136}
(\byear{2016})
\end{barticle}
\endbibitem

\bibitem[\protect\citeauthoryear{Bajaj et~al.}{2018}]{bajaj2018}
\begin{barticle}
\bauthor{\bsnm{Bajaj}, \binits{I.}},
\bauthor{\bsnm{Iyer}, \binits{S.S.}},
\bauthor{\bsnm{Hasan}, \binits{M.F.}}:
\batitle{A trust region-based two phase algorithm for constrained black-box and grey-box optimization with infeasible initial point}.
\bjtitle{Computers \& Chemical Engineering}
\bvolume{116},
\bfpage{306}--\blpage{321}
(\byear{2018})
\end{barticle}
\endbibitem

\bibitem[\protect\citeauthoryear{Conn et~al.}{2000}]{conn2000}
\begin{bbook}
\bauthor{\bsnm{Conn}, \binits{A.R.}},
\bauthor{\bsnm{Gould}, \binits{N.I.M.}},
\bauthor{\bsnm{Toint}, \binits{P.L.}}:
\bbtitle{Trust-Region Methods}.
\bpublisher{Society for Industrial and Applied Mathematics},
\blocation{Philadelphia}
(\byear{2000})
\end{bbook}
\endbibitem

\bibitem[\protect\citeauthoryear{Conn et~al.}{2009}]{conn2009global}
\begin{barticle}
\bauthor{\bsnm{Conn}, \binits{A.R.}},
\bauthor{\bsnm{Scheinberg}, \binits{K.}},
\bauthor{\bsnm{Vicente}, \binits{L.N.}}:
\batitle{Global convergence of general derivative-free trust-region algorithms to first- and second-order critical points}.
\bjtitle{SIAM Journal on Optimization}
\bvolume{20}(\bissue{1}),
\bfpage{387}--\blpage{415}
(\byear{2009})
\end{barticle}
\endbibitem

\bibitem[\protect\citeauthoryear{Shashaani et~al.}{2018}]{shashaani2018}
\begin{barticle}
\bauthor{\bsnm{Shashaani}, \binits{S.}},
\bauthor{\bsnm{Hashemi}, \binits{F.S.}},
\bauthor{\bsnm{Pasupathy}, \binits{R.}}:
\batitle{Astro-df: A class of adaptive sampling trust-region algorithms for derivative-free stochastic optimization}.
\bjtitle{SIAM Journal on Optimization}
\bvolume{28}(\bissue{4}),
\bfpage{3145}--\blpage{3176}
(\byear{2018})
\end{barticle}
\endbibitem

\bibitem[\protect\citeauthoryear{Ghanbari and Scheinberg}{2017}]{ghanbari2017}
\begin{botherref}
\oauthor{\bsnm{Ghanbari}, \binits{H.}},
\oauthor{\bsnm{Scheinberg}, \binits{K.}}:
Black-box optimization in machine learning with trust region based derivative free algorithm
(2017)
\end{botherref}
\endbibitem

\bibitem[\protect\citeauthoryear{Garmanjani et~al.}{2016}]{garmanjani2016}
\begin{barticle}
\bauthor{\bsnm{Garmanjani}, \binits{R.}},
\bauthor{\bsnm{J{\'u}dice}, \binits{D.}},
\bauthor{\bsnm{Vicente}, \binits{L.N.}}:
\batitle{Trust-region methods without using derivatives: worst case complexity and the nonsmooth case}.
\bjtitle{SIAM Journal on Optimization}
\bvolume{26}(\bissue{4}),
\bfpage{1987}--\blpage{2011}
(\byear{2016})
\end{barticle}
\endbibitem

\bibitem[\protect\citeauthoryear{Liuzzi et~al.}{2019}]{Liuzzi2019}
\begin{barticle}
\bauthor{\bsnm{Liuzzi}, \binits{G.}},
\bauthor{\bsnm{Lucidi}, \binits{S.}},
\bauthor{\bsnm{Rinaldi}, \binits{F.}},
\bauthor{\bsnm{Vicente}, \binits{L.N.}}:
\batitle{Trust-region methods for the derivative-free optimization of nonsmooth black-box functions}.
\bjtitle{SIAM Journal on Optimization}
\bvolume{29}(\bissue{4}),
\bfpage{3012}--\blpage{3035}
(\byear{2019})
\end{barticle}
\endbibitem

\bibitem[\protect\citeauthoryear{Ling et~al.}{2017}]{ling2017}
\begin{barticle}
\bauthor{\bsnm{Ling}, \binits{J.}},
\bauthor{\bsnm{Hutchinson}, \binits{M.}},
\bauthor{\bsnm{Antono}, \binits{E.}},
\bauthor{\bsnm{Paradiso}, \binits{S.}},
\bauthor{\bsnm{Meredig}, \binits{B.}}:
\batitle{High-dimensional materials and process optimization using data-driven experimental design with well-calibrated uncertainty estimates}.
\bjtitle{Integrating Materials and Manufacturing Innovation}
\bvolume{6},
\bfpage{207}--\blpage{217}
(\byear{2017})
\end{barticle}
\endbibitem

\bibitem[\protect\citeauthoryear{He et~al.}{2021}]{he2021}
\begin{barticle}
\bauthor{\bsnm{He}, \binits{J.}},
\bauthor{\bsnm{You}, \binits{H.}},
\bauthor{\bsnm{Sandstr{\"o}m}, \binits{E.}},
\bauthor{\bsnm{Nittinger}, \binits{E.}},
\bauthor{\bsnm{Bjerrum}, \binits{E.J.}},
\bauthor{\bsnm{Tyrchan}, \binits{C.}},
\bauthor{\bsnm{Engkvist}, \binits{O.}}:
\batitle{Molecular optimization by capturing chemist’s intuition using deep neural networks}.
\bjtitle{Journal of Cheminformatics}
\bvolume{13},
\bfpage{1}--\blpage{17}
(\byear{2021})
\end{barticle}
\endbibitem

\bibitem[\protect\citeauthoryear{Qian et~al.}{2016}]{qian2016}
\begin{bchapter}
\bauthor{\bsnm{Qian}, \binits{H.}},
\bauthor{\bsnm{Hu}, \binits{Y.Q.}},
\bauthor{\bsnm{Yu}, \binits{Y.}}:
\bctitle{Derivative-free optimization of high-dimensional non-convex functions by sequential random embeddings}.
In: \bbtitle{Proceedings of the International Joint Conference on Artificial Intelligence (IJCAI)},
pp. \bfpage{1946}--\blpage{1952}
(\byear{2016})
\end{bchapter}
\endbibitem

\bibitem[\protect\citeauthoryear{Wang et~al.}{2016}]{wang2016}
\begin{barticle}
\bauthor{\bsnm{Wang}, \binits{Z.}},
\bauthor{\bsnm{Hutter}, \binits{F.}},
\bauthor{\bsnm{Zoghi}, \binits{M.}},
\bauthor{\bsnm{Matheson}, \binits{D.}},
\bauthor{\bsnm{De~Freitas}, \binits{N.}}:
\batitle{Bayesian optimization in a billion dimensions via random embeddings}.
\bjtitle{Journal of Artificial Intelligence Research}
\bvolume{55},
\bfpage{361}--\blpage{387}
(\byear{2016})
\end{barticle}
\endbibitem

\bibitem[\protect\citeauthoryear{Song et~al.}{2022}]{song2022}
\begin{bchapter}
\bauthor{\bsnm{Song}, \binits{L.}},
\bauthor{\bsnm{Xue}, \binits{K.}},
\bauthor{\bsnm{Huang}, \binits{X.}},
\bauthor{\bsnm{Qian}, \binits{C.}}:
\bctitle{Monte carlo tree search based variable selection for high dimensional bayesian optimization}.
In: \bbtitle{Advances in Neural Information Processing Systems},
vol. \bseriesno{35},
pp. \bfpage{28488}--\blpage{28501}
(\byear{2022})
\end{bchapter}
\endbibitem

\bibitem[\protect\citeauthoryear{Rolland et~al.}{2018}]{rolland2018}
\begin{bchapter}
\bauthor{\bsnm{Rolland}, \binits{P.}},
\bauthor{\bsnm{Scarlett}, \binits{J.}},
\bauthor{\bsnm{Bogunovic}, \binits{I.}},
\bauthor{\bsnm{Cevher}, \binits{V.}}:
\bctitle{High-dimensional {B}ayesian optimization via additive models with overlapping groups}.
In: \bbtitle{Proceedings of the International Conference on Artificial Intelligence and Statistics (AISTATS)},
pp. \bfpage{298}--\blpage{307}.
\bpublisher{PMLR},
\blocation{Playa Blanca, Lanzarote, Spain}
(\byear{2018})
\end{bchapter}
\endbibitem

\bibitem[\protect\citeauthoryear{Porcelli and Toint}{2022}]{porcelli2022}
\begin{barticle}
\bauthor{\bsnm{Porcelli}, \binits{M.}},
\bauthor{\bsnm{Toint}, \binits{P.L.}}:
\batitle{Exploiting problem structure in derivative-free optimization}.
\bjtitle{ACM Transactions on Mathematical Software (TOMS)}
\bvolume{48}(\bissue{1}),
\bfpage{1}--\blpage{25}
(\byear{2022})
\end{barticle}
\endbibitem

\bibitem[\protect\citeauthoryear{Dantzig and Wolfe}{1960}]{dantzig1960}
\begin{barticle}
\bauthor{\bsnm{Dantzig}, \binits{G.B.}},
\bauthor{\bsnm{Wolfe}, \binits{P.}}:
\batitle{Decomposition principle for linear programs}.
\bjtitle{Operations Research}
\bvolume{8}(\bissue{1}),
\bfpage{101}--\blpage{111}
(\byear{1960})
\end{barticle}
\endbibitem

\bibitem[\protect\citeauthoryear{Benders}{1962}]{benders2005}
\begin{barticle}
\bauthor{\bsnm{Benders}, \binits{J.F.}}:
\batitle{Partitioning procedures for solving mixed-variables programming problems}.
\bjtitle{Numerische Mathematik}
\bvolume{4},
\bfpage{238}--\blpage{252}
(\byear{1962})
\end{barticle}
\endbibitem

\bibitem[\protect\citeauthoryear{Boyd et~al.}{2011}]{boyd2011}
\begin{barticle}
\bauthor{\bsnm{Boyd}, \binits{S.}},
\bauthor{\bsnm{Parikh}, \binits{N.}},
\bauthor{\bsnm{Chu}, \binits{E.}},
\bauthor{\bsnm{Peleato}, \binits{B.}},
\bauthor{\bsnm{Eckstein}, \binits{J.}}:
\batitle{Distributed optimization and statistical learning via the alternating direction method of multipliers}.
\bjtitle{Foundations and Trends in Machine Learning}
\bvolume{3}(\bissue{1}),
\bfpage{1}--\blpage{122}
(\byear{2011})
\end{barticle}
\endbibitem

\bibitem[\protect\citeauthoryear{Glowinski and Marroco}{1975}]{glowinski1975}
\begin{barticle}
\bauthor{\bsnm{Glowinski}, \binits{R.}},
\bauthor{\bsnm{Marroco}, \binits{A.}}:
\batitle{Sur l'approximation, par {\'e}l{\'e}ments finis d'ordre un, et la r{\'e}solution, par p{\'e}nalisation-dualit{\'e} d'une classe de probl{\`e}mes de {D}irichlet non lin{\'e}aires}.
\bjtitle{Revue fran{\c c}aise d'automatique, informatique, recherche op{\'e}rationnelle. Analyse num{\'e}rique}
\bvolume{9}(\bissue{R2}),
\bfpage{41}--\blpage{76}
(\byear{1975})
\end{barticle}
\endbibitem

\bibitem[\protect\citeauthoryear{Gabay and Mercier}{1976}]{gabay1976}
\begin{barticle}
\bauthor{\bsnm{Gabay}, \binits{D.}},
\bauthor{\bsnm{Mercier}, \binits{B.}}:
\batitle{A dual algorithm for the solution of nonlinear variational problems via finite element approximation}.
\bjtitle{Computers \& Mathematics with Applications}
\bvolume{2}(\bissue{1}),
\bfpage{17}--\blpage{40}
(\byear{1976})
\end{barticle}
\endbibitem

\bibitem[\protect\citeauthoryear{Ryu and Yin}{2022}]{ryu2022}
\begin{bbook}
\bauthor{\bsnm{Ryu}, \binits{E.K.}},
\bauthor{\bsnm{Yin}, \binits{W.}}:
\bbtitle{Large-Scale Convex Optimization: Algorithms and Analyses Via Monotone Operators}.
\bpublisher{Cambridge University Press},
\blocation{Cambridge}
(\byear{2022})
\end{bbook}
\endbibitem

\bibitem[\protect\citeauthoryear{Spingarn}{1985}]{spingarn1985}
\begin{barticle}
\bauthor{\bsnm{Spingarn}, \binits{J.E.}}:
\batitle{Applications of the method of partial inverses to convex programming: Decomposition}.
\bjtitle{Mathematical Programming}
\bvolume{32}(\bissue{2}),
\bfpage{199}--\blpage{223}
(\byear{1985})
\end{barticle}
\endbibitem

\bibitem[\protect\citeauthoryear{Krishnamoorthy and Paulson}{2023}]{krishnamoorthy2023}
\begin{barticle}
\bauthor{\bsnm{Krishnamoorthy}, \binits{D.}},
\bauthor{\bsnm{Paulson}, \binits{J.A.}}:
\batitle{Multi-agent black-box optimization using a {B}ayesian approach to alternating direction method of multipliers}.
\bjtitle{IFAC-PapersOnLine}
\bvolume{56}(\bissue{2}),
\bfpage{2232}--\blpage{2237}
(\byear{2023})
\end{barticle}
\endbibitem

\bibitem[\protect\citeauthoryear{Krishnamoorthy}{2025}]{krishnamoorthy2025}
\begin{barticle}
\bauthor{\bsnm{Krishnamoorthy}, \binits{D.}}:
\batitle{Eccbo: An inherently safe {B}ayesian optimization with embedded constraint control for real-time process optimization}.
\bjtitle{Journal of Process Control}
\bvolume{152},
\bfpage{103467}
(\byear{2025})
\end{barticle}
\endbibitem

\bibitem[\protect\citeauthoryear{Rodriguez et~al.}{2018}]{rodriguez2018}
\begin{barticle}
\bauthor{\bsnm{Rodriguez}, \binits{J.S.}},
\bauthor{\bsnm{Nicholson}, \binits{B.}},
\bauthor{\bsnm{Laird}, \binits{C.}},
\bauthor{\bsnm{Zavala}, \binits{V.M.}}:
\batitle{Benchmarking {ADMM} in nonconvex {NLP}s}.
\bjtitle{Computers \& Chemical Engineering}
\bvolume{119},
\bfpage{315}--\blpage{325}
(\byear{2018})
\end{barticle}
\endbibitem

\bibitem[\protect\citeauthoryear{Hong et~al.}{2016}]{hong2016}
\begin{barticle}
\bauthor{\bsnm{Hong}, \binits{M.}},
\bauthor{\bsnm{Luo}, \binits{Z.-Q.}},
\bauthor{\bsnm{Razaviyayn}, \binits{M.}}:
\batitle{Convergence analysis of alternating direction method of multipliers for a family of nonconvex problems}.
\bjtitle{SIAM Journal on Optimization}
\bvolume{26}(\bissue{1}),
\bfpage{337}--\blpage{364}
(\byear{2016})
\end{barticle}
\endbibitem

\bibitem[\protect\citeauthoryear{Wang et~al.}{2019}]{wang2019}
\begin{barticle}
\bauthor{\bsnm{Wang}, \binits{Y.}},
\bauthor{\bsnm{Yin}, \binits{W.}},
\bauthor{\bsnm{Zeng}, \binits{J.}}:
\batitle{Global convergence of {ADMM} in nonconvex nonsmooth optimization}.
\bjtitle{Journal of Scientific Computing}
\bvolume{78},
\bfpage{29}--\blpage{63}
(\byear{2019})
\end{barticle}
\endbibitem

\bibitem[\protect\citeauthoryear{Tang and Daoutidis}{2022}]{tang2022}
\begin{barticle}
\bauthor{\bsnm{Tang}, \binits{W.}},
\bauthor{\bsnm{Daoutidis}, \binits{P.}}:
\batitle{Fast and stable nonconvex constrained distributed optimization: the {ELLADA} algorithm}.
\bjtitle{Optimization and Engineering}
\bvolume{23}(\bissue{1}),
\bfpage{259}--\blpage{301}
(\byear{2022})
\end{barticle}
\endbibitem

\bibitem[\protect\citeauthoryear{Hager and Zhang}{2019}]{hager2019}
\begin{barticle}
\bauthor{\bsnm{Hager}, \binits{W.W.}},
\bauthor{\bsnm{Zhang}, \binits{H.}}:
\batitle{Inexact alternating direction methods of multipliers for separable convex optimization}.
\bjtitle{Computational Optimization and Applications}
\bvolume{73},
\bfpage{201}--\blpage{235}
(\byear{2019})
\end{barticle}
\endbibitem

\bibitem[\protect\citeauthoryear{Bai et~al.}{2025}]{bai2025}
\begin{barticle}
\bauthor{\bsnm{Bai}, \binits{J.}},
\bauthor{\bsnm{Zhang}, \binits{M.}},
\bauthor{\bsnm{Zhang}, \binits{H.}}:
\batitle{An inexact {ADMM} for separable nonconvex and nonsmooth optimization}.
\bjtitle{Computational Optimization and Applications}
\bvolume{90}(\bissue{2}),
\bfpage{445}--\blpage{479}
(\byear{2025})
\end{barticle}
\endbibitem

\bibitem[\protect\citeauthoryear{Eckstein and Yao}{2017}]{eckstein2017}
\begin{barticle}
\bauthor{\bsnm{Eckstein}, \binits{J.}},
\bauthor{\bsnm{Yao}, \binits{W.}}:
\batitle{Approximate {ADMM} algorithms derived from {L}agrangian splitting}.
\bjtitle{Computational Optimization and Applications}
\bvolume{68},
\bfpage{363}--\blpage{405}
(\byear{2017})
\end{barticle}
\endbibitem

\bibitem[\protect\citeauthoryear{Xie et~al.}{2017}]{xie2017}
\begin{barticle}
\bauthor{\bsnm{Xie}, \binits{J.}},
\bauthor{\bsnm{Liao}, \binits{A.}},
\bauthor{\bsnm{Yang}, \binits{X.}}:
\batitle{An inexact alternating direction method of multipliers with relative error criteria}.
\bjtitle{Optimization Letters}
\bvolume{11},
\bfpage{583}--\blpage{596}
(\byear{2017})
\end{barticle}
\endbibitem

\bibitem[\protect\citeauthoryear{Sun and Sun}{2023}]{sun2023}
\begin{barticle}
\bauthor{\bsnm{Sun}, \binits{K.}},
\bauthor{\bsnm{Sun}, \binits{X.A.}}:
\batitle{A two-level distributed algorithm for nonconvex constrained optimization}.
\bjtitle{Computational Optimization and Applications}
\bvolume{84}(\bissue{2}),
\bfpage{609}--\blpage{649}
(\byear{2023})
\end{barticle}
\endbibitem

\bibitem[\protect\citeauthoryear{Clarke et~al.}{1998}]{Clarke1998}
\begin{bbook}
\bauthor{\bsnm{Clarke}, \binits{F.H.}},
\bauthor{\bsnm{Ledyaev}, \binits{Y.S.}},
\bauthor{\bsnm{Stern}, \binits{R.J.}},
\bauthor{\bsnm{Wolenski}, \binits{R.R.}}:
\bbtitle{Nonsmooth Analysis and Control Theory}.
\bpublisher{Springer},
\blocation{New York}
(\byear{1998})
\end{bbook}
\endbibitem

\bibitem[\protect\citeauthoryear{Rockafellar and Wets}{1998}]{RockafellarWets1998}
\begin{bbook}
\bauthor{\bsnm{Rockafellar}, \binits{R.T.}},
\bauthor{\bsnm{Wets}, \binits{R.J.-B.}}:
\bbtitle{Variational Analysis}.
\bsertitle{Grundlehren der Mathematischen Wissenschaften},
vol. \bseriesno{317}.
\bpublisher{Springer},
\blocation{Berlin}
(\byear{1998})
\end{bbook}
\endbibitem

\bibitem[\protect\citeauthoryear{Conn et~al.}{1994}]{Conn1994}
\begin{bchapter}
\bauthor{\bsnm{Conn}, \binits{A.R.}},
\bauthor{\bsnm{Gould}, \binits{N.}},
\bauthor{\bsnm{Lescrenier}, \binits{M.}},
\bauthor{\bsnm{Toint}, \binits{P.L.}}:
\bctitle{Performance of a multifrontal scheme for partially separable optimization}.
In: \bbtitle{Advances in Optimization and Numerical Analysis},
pp. \bfpage{79}--\blpage{96}.
\bpublisher{Springer},
\blocation{Dordrecht}
(\byear{1994})
\end{bchapter}
\endbibitem

\bibitem[\protect\citeauthoryear{Powell}{2006}]{Powell2006}
\begin{bchapter}
\bauthor{\bsnm{Powell}, \binits{M.J.D.}}:
\bctitle{The {NEWUOA} software for unconstrained optimization without derivatives}.
In: \beditor{\bsnm{Di~Pillo}, \binits{G.}},
\beditor{\bsnm{Roma}, \binits{M.}} (eds.)
\bbtitle{Large-Scale Nonlinear Optimization},
pp. \bfpage{255}--\blpage{297}.
\bpublisher{Springer},
\blocation{New York}
(\byear{2006})
\end{bchapter}
\endbibitem

\bibitem[\protect\citeauthoryear{Qiu et~al.}{2023}]{Qiu2023}
\begin{bchapter}
\bauthor{\bsnm{Qiu}, \binits{Y.}},
\bauthor{\bsnm{Shanbhag}, \binits{U.}},
\bauthor{\bsnm{Yousefian}, \binits{F.}}:
\bctitle{Zeroth-order methods for nondifferentiable, nonconvex, and hierarchical federated optimization}.
In: \bbtitle{Advances in Neural Information Processing Systems},
vol. \bseriesno{36},
pp. \bfpage{3425}--\blpage{3438}
(\byear{2023})
\end{bchapter}
\endbibitem

\bibitem[\protect\citeauthoryear{Lohweg}{2012}]{Lohweg2012}
\begin{botherref}
\oauthor{\bsnm{Lohweg}, \binits{V.}}:
Banknote Authentication [Dataset].
UCI Machine Learning Repository.
\url{https://doi.org/10.24432/C55P57}
(2012)
\end{botherref}
\endbibitem

\end{thebibliography}

\end{document}